\definecolor{mygreen}{HTML}{43a047}
\newcolumntype{H}{>{\setbox0=\hbox\bgroup}c<{\egroup}@{}}
\newcommand{\Om}{\Omega}
\newcommand{\D}{\Delta}
\def \psit{\psi_t}
\def \psitt{\psi_{tt}}
\def \psittt{\psi_{ttt}}
\newcommand{\ddt}{\frac{\textup{d}}{\textup{d}t}}
\newcommand{\ds}{\, \textup{d} s }
\newcommand{\dx}{\, \textup{d} x}
\newcommand{\dxs}{\, \textup{d}x\textup{d}s}
\newcommand{\intt}{\int_0^t}
\newcommand{\intO}{\int_{\Omega}}
\newcommand{\R}{\mathbb{R}} 
\newcommand{\Honetwo}{{H_\diamondsuit^2(\Omega)}}
\newcommand{\Honethree}{{H_\diamondsuit^3(\Omega)}}
\newtheorem{theorem}{Theorem}
\newtheorem{lemma}{Lemma}
\newtheorem{proposition}{Proposition}
\newtheorem*{assumption*}{Assumptions}
\newtheorem{assumption}{Assumption}
\newtheorem{definition}{Definition}
\newtheorem{remark}{Remark}
\numberwithin{lemma}{section}
\numberwithin{proposition}{section}
\numberwithin{theorem}{section}
\numberwithin{corollary}{section}
\numberwithin{equation}{section}
\newcommand{\leqnomode}{\tagsleft@true}
\newcommand{\reqnomode}{\tagsleft@false}
\newcommand{\genk}{\mathfrak{K}}
\newcommand\Lconv{\ast}
\definecolor{grey}{rgb}{0.5,0.5,0.5}
\colorlet{brown}{brown!80!black}
\definecolor{darkgreen}{rgb}{0,0.5,0}
\newcommand{\alignedintertext}[1]{%
	\noalign{%
		\vskip\belowdisplayshortskip
		\vtop{\hsize=\linewidth#1\par
			\expandafter}%
		\expandafter\prevdepth\the\prevdepth
	}%
}
\title[]{Global existence for a fractionally damped nonlinear Jordan--Moore--Gibson--Thompson equation} 
\subjclass[2020]{35L75,35B25}
\keywords{Global existence, Jordan--Moore--Gibson--Thompson, fractional models, nonlinear acoustics, nonlocal damping}
\author[M. Meliani and B. Said-Houari]{Mostafa Meliani \and Belkacem Said-Houari}
\address{ 
	Department of Evolution Differential Equations\\ 
	Institute of Mathematics of the Academy of Sciences of the Czech Republic  \\ 
	\v Zitn\' a 25, CZ-115 67 Praha 1, Czech Republic}
\email{meliani@math.cas.cz} 
\address{ 
	Department of Mathematics \\ 
	College of Sciences\\
	University of Sharjah
	\\
	P. O. Box: 27272, Sharjah, UAE}
\email{bhouari@sharjah.ac.ae} 
\begin{document}

\maketitle    
\begin{abstract}
    In nonlinear acoustics, higher-order-in-time equations arise when taking into account a class of thermal relaxation laws in the modeling of sound wave propagation. In the literature, these families of equations came to be known as Jordan--Moore--Gibson--Thompson (JMGT) models. In this work, we show the global existence of solutions relying only on minimal assumptions on the nonlocal damping kernel. In particular,  our result covers the until-now open question of global existence of solutions for the fractionally damped JMGT model with quadratic gradient nonlinearity. The factional damping setting forces us to work with non-integrable kernels, which require a tailored approach in the analysis to control.  This approach relies on exploiting the specific nonlinearity structure combined with a weak damping provided by the nonlocality kernel.
\end{abstract}
%%%%%%%%%%%%%%%%%    
\section{Introduction}
We consider  the fractionally damped Jordan--Moore--Gibson--Thompson equation (JMGT): 
\begin{equation}\label{eq:first_eq}
\left\{
\begin{aligned}
		&\tau \psittt+ \psitt
	-c^2  \Delta \psi - \tau c^2  \Delta\psit
	- \delta   \textup{D}_t^{1-\alpha} \D (\tau  \psit + \psi)= 2 \sigma \nabla \psi \cdot \nabla \psi_t  + f(x,t),\\
	&\psi(t=0)= \psi_0 ,\qquad \psit(t=0)= \psi_1,\qquad \psitt(t=0)=\psi_2 \\
	&\psi|_{\partial \Omega} = 0.
	\end{aligned}
	\right.
\end{equation}
Here $x\in \Omega$ and $t>0$, where $\Omega\subset\R^d$ with $d\in \{1,2,3\}$, is a bounded $C^{2,1}-$regular domain.  
The parameters $\tau, \delta$ are positive ($>0$), while $\sigma \in \R$. The fractional derivative parameter $\alpha$ belongs to $(0,1)$. 
 Here $\textup{D}_t^{\gamma}$ denotes the Caputo--Djrbashian \cite{Djrbashian:1966,Caputo:1967} 
 derivative of order $\gamma \in (0,1)$, that is 
\begin{equation}
	\textup{D}_t^{\gamma} v = \frac{1}{\Gamma(1-\gamma)} \intt (t-s)^{-\gamma} v_t \ds \quad \text{for }\ v \in W^{1,1}(0,T), \quad t \in (0,T). 
\end{equation} 
The function $f$ is a source term depending on the space and time variables $(x,t)$ but not on the solution $\psi$.

Equation~\eqref{eq:first_eq}  is a special case of the models of nonlinear acoustics derived in \cite{kaltenbacher2022time} (see also; \cite{jordan2014second}) based on incorporating the average of two generalized Cattaneo heat flux law proposed by Compte and Metzler~\cite{compte1997generalized} in the set of governing equations of acoustic propagation. 
The first equation in \eqref{eq:first_eq} can be conveniently rewritten as an integro-differential equation with an appropriate kernel as follows 
\begin{equation}\label{eq:first_eq_genk}
		\tau \psittt+\psitt
-c^2 \Delta \psi - \tau c^2 \Delta\psit
- \delta \genk \Lconv\Delta (\tau\psitt+ \psit)= 2 \sigma \nabla \psi \cdot \nabla \psi_t + f,
\end{equation}
where 
$\Lconv$ denotes the Laplace convolution, and is interpreted as
\begin{align}
	(\mathfrak{K}\Lconv g)(t)=\int_0^t \mathfrak{K} (s)\,g(t-s)\ds, \quad \quad \textrm{for }\genk, g \in L^1(0,t).
\end{align} 
In particular, to obtain the Djrbashian--Caputo fractional derivative of order $\alpha \in (0,1)$, we can set:
\begin{equation}\label{eq:frac_kernel}
\genk: t \mapsto \frac1{\Gamma(\alpha)} t^{\alpha-1}.
\end{equation}

Equation~\eqref{eq:first_eq_genk} is supplemented by zero-Dirchlet boundary conditions and appropriate initial data, discussed in Section~\ref{sec:Main_Thm}.

\subsection{Related results}
The  study of the initial-boundary value problem of~\eqref{eq:first_eq} (alternatively,~\eqref{eq:first_eq_genk})  with zero-Dirichlet data and sufficiently regular initial data has thus far focused on questions of local existence  in a Hilbertian setting and asymptotic behavior (e.g., rate of convergence as $\alpha \to 0^+$, and $\tau \to 0^+$)  in order to connect different models of acoustics. We refer the reader to~\cite{kaltenbacher2022time,meliani2023unified,kaltenbacher2024vanishing} for such studies.  

The general idea for the well-posedness proofs for the nonlinear problem  in the absence of quadratic gradient nonlinearities ($\sigma=0$ in~\eqref{eq:first_eq} or \eqref{eq:first_eq_genk}) is based on considering  regular and small initial data
\[(\psi_0,\psi_1,\psi_2) \in \big(H^2(\Omega)\cap H_0^1(\Omega)\big) \times \big(H^2(\Omega)\cap H_0^1(\Omega)\big) \times H_0^1(\Omega),\]
for which the existence of the solution can be established on a sufficiently small time interval $[0,T]$; see, e.g.,~\cite{kaltenbacher2022time,kaltenbacher2024vanishing}.  In the presence of gradient nonlinearities, more regular initial data are needed. For instance, in \cite[Theorem 3.3]{kaltenbacher2024vanishing} assumes initial data 
\[(\psi_0,\psi_1,\psi_2) \in H^3(\Omega) \times H^3(\Omega)  \times H^2(\Omega) .\]
The authors of the above mentioned works did not address the global existence and long-time behavior of the solution, leaving these questions open for future investigation.
The main challenge in investigating the question of global existence lies in the absence of uniform-in-time bounds within the energy estimates. 
   
Interest in long-term behavior of solutions for the JMGT equation with fractional damping is more recent and was shown only for a linearized  version of  fractional JMGT equation and for a restricted family of kernels in~\cite{meliani2025energy}.  Among the restrictions on the kernels found in \cite{meliani2025energy} is the need for exponentially decaying kernels, or at least $L^1(\R^+)$ kernels  for a weaker result.
Kernel~\eqref{eq:frac_kernel} verifies neither condition,  i.e., it is not exponentially decaying kernels, nor is it in $L^1(\R^+)$.    In~\cite{meliani2024well}, a global extensibility criterion was proposed for a nonlinear fractionally damped JMGT equation with the help of Brezis--Gallou\"et--Wainger inequalities~\cite{brezis1980nonlinear,brezis1980note}. In particular, we showed that as long as 
\[\|\psi_t\|_{H^{d/2}(\Omega)} + \|\psi_t\|_{H^{d/2-1}(\Omega)} < \infty,  \]
with $d$ being the spatial dimension, then a local solution to \eqref{eq:first_eq_genk}  belonging to a suitable solution class can be extended to be global in time. Our result relied on the assumption that the kernel is coercive (see~\cite[Assumption 1]{meliani2024well}) and complemented the result of \cite{nikolic2024infty} which showed that blow-up of the inviscid form of \eqref{eq:first_eq_genk} (i.e., with $\delta=0$) is driven by an inflation of the $L^\infty$-norm of the solution $\|\psi_t\|_{L^\infty(\Omega)}$ (as opposed to a gradient blow-up).

 When the convolution kernel $\genk$ is allowed to be the Dirac delta pulse $\delta_0$, problem \eqref{eq:first_eq_genk} reduces to the initial-boundary value problem for the nonlinear JMGT equation:
\begin{equation}\label{JMGT_Eq}
		\tau \psittt+ \psitt
	-c^2  \Delta \psi - (\delta+\tau c^2)  \Delta\psit - 2 \sigma \nabla \psi \cdot \nabla \psi_t = 0,
\end{equation}
which can be seen as a special case of the thermally relaxed Kuznetsov equation~\cite[Equation (72)]{jordan2014second}, where the coefficient of nonlinearity is set to 1~\cite[p. 2197]{jordan2014second}.

The related relaxed Westervelt equation:
\begin{equation}\label{JMGT_Eq_Wes}
		\tau \psittt+ (1+2k\psit)\psitt
	-c^2  \Delta \psi - (\delta+\tau c^2)  \Delta\psit  = 0,
\end{equation}
has received comparatively higher attention in recent years, as the absence of quadratic gradient nonlinearities makes it easier to study. Global existence and exponential decay for equation~\eqref{JMGT_Eq_Wes} has been established in  \cite{kaltenbacher2012well} for $\delta>0$. A similar study was carried out for its linearized counterpart ($k=0$, $\delta>0$) in~\cite{kaltenbacher2011wellposedness} 
where the authors used the energy method together with a bootstrap argument to show, under a smallness assumption on  the initial data, a global existence result and decay estimates for the solution.  See also the decay results for the linearized problem in \cite{pellicer2019wellposedness} and \cite{Chen_Ikehata_2021}. The interested reader is referred to~\cite{KaltenbacherNikolic,conejero2015chaotic,Klaten_2015,marchand2012abstract,P-SM-2019,said2022global} for various studies related to \eqref{JMGT_Eq}. 

 As for equation \eqref{JMGT_Eq}, whose prominent feature is the quadratic gradient nonlinearity: $2\sigma \nabla \psit\cdot\nabla\psi$, there exist far fewer results. A notable result is the global existence result of solution to the Cauchy problem for the JMGT equation with Kuznetsov nonlinearity:
\begin{equation}\label{eq:ChenKuz}
		\tau \psittt+  (1+2k\psit)\psitt
	-c^2  \Delta \psi - (\delta+\tau c^2)  \Delta\psit - 2 \sigma \nabla \psi \cdot \nabla \psi_t = 0,
\end{equation}
is due to Racke and Said-Houari~\cite{racke2021global}. Naturally, their result also covers \eqref{JMGT_Eq_Wes} by setting $\sigma=0$. 
 We also mention here the work of \cite{chen2022influence} who proved finite-time blow-up of the solution for the inviscid (i.e., $\delta = 0$) JMGT equation~\eqref{eq:ChenKuz}
in $\R^n$ for appropriately chosen initial data.

Our goal in this work is  to show that the fractional damping is strong enough to prevent the formation of this blow-up and that we can show global boundedness of the solution for small enough data.

In this work, we use a similar bootstrapping idea as  \cite{racke2021global} to prove global existence of solutions for small enough data.

\subsection{Novelty}
Our work answers the fundamental question of whether fractional damping is strong enough to produce sufficient smoothness for a solution to exist globally.  Indeed, in equation~\eqref{eq:first_eq_genk}, no damping can be extracted from the term $ -\tau c^2 \D \psit$ as we are in the so-called critical case, thus our arguments will need to make primarily use of the term $- \delta \genk \Lconv \D \psitt$ to prevent blow up of the solution.
Working with nonlocal damping has clear drawbacks from the point of view of analysis. Indeed, a quick inspection of the convolution kernel $\genk$, shows that it is not even $L^1(\R^+)$ making it difficult to control the nonlinear and nonlocal terms without first extracting some control on the norm of the solution by exploiting the kernel term; see Lemma~\ref{lemma_L2_control} below. Thus, our main ingredient will be to exploit the concept of strongly positive kernels introduced by Nohel and Shea~\cite[Corollary 2.2]{nohel1976frequency}; see the characterization of strongly positive kernels given in Lemma~\ref{thm:strong_pos_ker}. Furthermore, we  need to exploit the  structure of the nonlinearity, which can be written as a derivative in time: $2 \sigma \nabla \psi \cdot \nabla \psi_t =\sigma\partial_t(|\nabla\psi|^2)$. 
This will allow us to integrate the equation in time and obtain the necessary estimates to prove the global existence; see Section~\ref{sec:nabla_v_dmp} for more details.

\subsection{Organization of the paper}
The rest of the paper is organized as follows: In Section~\ref{sec:kernel_assu}, we discuss the main assumptions made on the nonlocal damping kernel and relevant properties of convolutions. The main result of the paper (Theorems \ref{thm:main_thm}) is presented in Section~\ref{sec:Main_Thm}. Section~\ref{sec:proof_Main_Theorem} is then devoted to the proof of  Theorem~\ref{thm:main_thm} through energy arguments. Appendix~\ref{App:Appendix} provides a local well-posedness result for the initial-boundary value problem related to \eqref{eq:first_eq_genk} with a source term $f$.

%%%%%%%%%%%%%%%%
\section{Preliminaries and assumptions} \label{sect2}
In this section, we introduce a few notations, and list some necessary assumptions on the kernel $\genk$. In addition,  we collect some  helpful embedding results and inequalities that we will repeatedly use in the proof of the main results. 

\subsection{Notations}
Throughout the paper,  the letter $C$ denotes a generic positive constant
that does not depend on time, and can have different values on different occasions.  
We often write $f \lesssim g$ where there exists a constant $C>0$, independent of parameters of interest such that $f\leq C g$. 
We often omit the spatial and temporal domain when writing norms; for example, $\|\cdot\|_{L^p L^q}$ denotes the norm in $L^p(0,T; L^q(\Omega))$.  

 We assume throughout that
$\Omega \subset \R^n$, where $n \in \{1, 2, 3\}$, is a bounded $C^{2,1}$-regular domain. This will allow us to use desired elliptic regularity results; see estimate~\eqref{eq:elliptic_reg}. We also introduce, similarly to the work~\cite{kaltenbacher2024kuznetsov} where a similar quadratic nonlinearity was treated, the following functional Sobolev spaces which are of interest in the analysis:
\begin{equation} \label{sobolev_withtraces}
	\begin{aligned}
		\Honetwo:=&\,H_0^1(\Omega)\cap H^2(\Omega), \ 
		\Honethree:=\, \left\{u\in H^3(\Omega)\,:\,  u|_{\partial\Omega} = 0, \  \D u|_{\partial\Omega} = 0\right\}.
	\end{aligned}
\end{equation}

\subsection{Assumption on the kernel $\genk$}\label{sec:kernel_assu}
In order to state and prove our main result, we   make the following assumptions on the kernel $\genk$.
%%%%%%%%%%%%%%%%

\begin{assumption}\label{assu:kernel}
We assume that $\genk$ is a locally integrable function on $[0,\infty)$. Furthermore, we assume that it is twice differentiable with $\genk_t \neq 0$ satisfying
	$$(-1)^n \genk^{(n)} (t) \geq 0 \qquad \forall t>0,\ n=0,1,2$$
\end{assumption}

	We intend here to show that a kernel $\genk$ verifying Assumption~\ref{assu:kernel} can be used to provide some damping; see Lemma~\ref{lemma_L2_control} below. To this end let us recall a definition and a characterization of strongly positive definite kernels.
	\begin{definition}[Strongly positive kernel \cite{nohel1976frequency}]
		A real function $h \in L^1_\textup{loc}(0,\infty)$ is said to be strongly positive with constant $\eta$ if there exists $\eta>0$ such that the function $g: t \mapsto h(t) - \eta e^{-t}$ is of positive type. That is if 
		\begin{equation}
			\intt  (g \Lconv y )(s) y (s) \ds \geq 0,
		\end{equation}
	or equivalently, if 
	\begin{equation}
	\intt  (h \Lconv y )(s) y (s) \ds \geq \eta \intt  (e^{-t} \Lconv y )(s) y (s) \ds.
	\end{equation}
\end{definition}

Note that $t\mapsto e^{-t}$ is a positive kernel~\cite[pp 279-280]{nohel1976frequency} and thus a strongly positive kernel is naturally also a positive kernel, i.e.,
\begin{equation}\label{ineq:positive_kernel}
    \intt  (h \Lconv y )(s) y (s) \ds \geq 0.
\end{equation}

We recall the result from~\cite[Corollary 2.2]{nohel1976frequency}, which is fundamental in establishing the strong positivity of many kernel classes discussed in the literature.
\begin{lemma}
    [Characterization of strongly positive kernels]\label{thm:strong_pos_ker}
	A twice differentiable function $h$ with $h_t \neq 0$ satisfying
	$$(-1)^n h^{(n)} (t) \geq 0 \qquad \forall t>0,\ n=0,1,2$$
	is a strongly positive kernel.
\end{lemma}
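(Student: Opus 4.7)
The plan is to translate strong positivity into a pointwise lower bound on the real part of the Laplace--Fourier transform of $h$, and then extract that bound via integration by parts exploiting the sign hypotheses. First, I would invoke a Plancherel--Parseval identity (applied to $y\in L^2(0,t)$ extended by zero outside $[0,t]$) to express
\[
\intt(h\Lconv y)(s)y(s)\ds \ =\ \frac{1}{2\pi}\int_{-\infty}^\infty \textup{Re}\,\tilde h(i\omega)\,|\hat y(\omega)|^2\domega,
\]
and analogously for $e^{-\cdot}$ in place of $h$. Since $\textup{Re}\,\widetilde{e^{-\cdot}}(i\omega)=1/(1+\omega^2)$, strong positivity of $h$ with constant $\eta>0$ becomes equivalent to the uniform pointwise bound $\textup{Re}\,\tilde h(i\omega)\geq \eta/(1+\omega^2)$ for all $\omega\in\R$.

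Next, I would integrate by parts twice in $\tilde h(i\omega)=\int_0^\infty h(t)e^{-i\omega t}\dt$. The monotonicity consequences of the sign hypotheses ($h\geq 0$ and $-h'\geq 0$ both non-increasing) force $h(\infty)=(-h')(\infty)=0$ unless $h$ is constant, killing the boundary terms at infinity and yielding
\[
\omega^2\,\textup{Re}\,\tilde h(i\omega) \ =\ -h'(0^+)\ -\ \int_0^\infty h''(t)\cos(\omega t)\dt.
\]
Using $h''\geq 0$ and $|\cos(\omega t)|\leq 1$, the oscillatory integral is dominated in absolute value by $\int_0^\infty h''(t)\dt=-h'(0^+)$, which already shows $\textup{Re}\,\tilde h(i\omega)\geq 0$ and hence establishes the positive-type property \eqref{ineq:positive_kernel}. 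Moreover, the hypothesis $h_t\not\equiv 0$ combined with $h'\leq 0$ and $h''\geq 0$ forces $-h'(0^+)>0$ (possibly $+\infty$), not merely $\geq 0$, which is the nontriviality needed for the strong positivity constant.

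To upgrade this to strong positivity, I would examine the asymptotic regimes $|\omega|\to\infty$ and $|\omega|\to 0$. As $|\omega|\to\infty$, the Riemann--Lebesgue lemma gives $\int_0^\infty h''(t)\cos(\omega t)\dt\to 0$, so $(1+\omega^2)\textup{Re}\,\tilde h(i\omega)\to -h'(0^+)>0$. Near $\omega=0$, either $h\in L^1(0,\infty)$ and continuity of $\tilde h(i\cdot)$ at $\omega=0$ yields the bound $\textup{Re}\,\tilde h(0)=\int_0^\infty h(t)\dt>0$, or $h\notin L^1(0,\infty)$ and $\textup{Re}\,\tilde h(i\omega)\to+\infty$ as $\omega\to 0$; in both cases $(1+\omega^2)\textup{Re}\,\tilde h(i\omega)$ admits a strict positive lower bound near zero. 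A compactness argument on the intermediate range delivers the uniform constant $\eta>0$. The main technical obstacle is justifying the two integrations by parts when $h'$ is unbounded at $0^+$, as for the fractional kernel $h(t)=t^{\alpha-1}/\Gamma(\alpha)$; this is handled via the regularization $h_\varepsilon(t):=h(t+\varepsilon)$, which preserves all sign hypotheses and furnishes uniform-in-$\varepsilon$ bounds amenable to passage to the limit $\varepsilon\to 0^+$.
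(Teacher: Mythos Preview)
The paper does not supply a proof of this lemma; it is quoted directly from \cite[Corollary~2.2]{nohel1976frequency} without argument. Your sketch is a faithful reconstruction of the Nohel--Shea frequency-domain proof cited there (Parseval reduction to the pointwise bound $\textup{Re}\,\tilde h(i\omega)\geq\eta/(1+\omega^2)$, two integrations by parts, asymptotic analysis at $|\omega|\to 0$ and $|\omega|\to\infty$, compactness on the intermediate range, and the $\varepsilon$-shift regularization to handle a possible singularity of $h'$ at $t=0^+$), so your approach and the one the paper defers to coincide.
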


\begin{lemma}[Lemma 2.4 in \cite{kawashima1993global} and Lemma 2.9 in \cite{cannarsa2011integro}.]\label{lemma_L2_control}
	Suppose that $y\in L^2_\textup{loc}(0, \infty)$ and $y_t \in L^2_\textup{loc}(0, \infty)$.
	Let $\genk$ be a strongly positive kernels with constants $\eta$. Then the following holds:
		\begin{equation}\label{ineq:L2_lemma}
			\intt |y(s)|^2\ds \leq |y(0)|^2 + 2 \eta^{-1}  \intt  (\genk \Lconv y )(s) y (s) \ds +  2 \eta^{-1}  \intt  (\genk \Lconv y_t )(s) y_t (s) \ds.
		\end{equation}
\end{lemma}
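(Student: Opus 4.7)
The plan is to reduce the inequality with a general strongly positive kernel $\genk$ to the explicit case of the exponential kernel $e^{-t}$, for which convolutions satisfy a first-order ODE and admit direct integral identities. By the strong-positivity hypothesis on $\genk$, I would first write
$$
\intt (\genk\Lconv y)(s)\,y(s)\ds \geq \eta \intt (e^{-\cdot}\Lconv y)(s)\,y(s)\ds,
$$
and analogously with $y_t$ in place of $y$, so that the task reduces to proving
$$
\intt |y(s)|^2\ds \leq |y(0)|^2 + 2\intt (e^{-\cdot}\Lconv y)(s)\,y(s)\ds + 2\intt (e^{-\cdot}\Lconv y_t)(s)\,y_t(s)\ds,
$$
after which the full estimate follows by dividing by $\eta$ in the bounds supplied by strong positivity.

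To handle the exponential case, I would introduce the auxiliary functions $Z(t) := (e^{-\cdot}\Lconv y)(t)$ and $Y(t) := (e^{-\cdot}\Lconv y_t)(t)$. Differentiation under the integral sign shows that $Z$ satisfies the scalar ODE $Z'+Z=y$ with $Z(0)=0$, and likewise $Y'+Y=y_t$ with $Y(0)=0$. An integration by parts in the definition of $Y$ further yields the reconstruction $Y(t)=y(t)-e^{-t}y(0)-Z(t)$, hence $Z'(t) = Y(t) + e^{-t}y(0)$. Testing $Z'+Z=y$ with $Z$ and integrating over $(0,t)$ then produces the sharp identity
$$
\intt Z(s)^2\ds + \tfrac{1}{2}Z(t)^2 = \intt Z(s)y(s)\ds,
$$
and the analogous computation with $Y$ gives $\intt Y(s)^2\ds \leq \intt Y(s)y_t(s)\ds$.

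I would close the argument by expanding $\intt y(s)^2\ds = \intt (Z+Z')^2\ds = \intt Z(s)^2\ds + Z(t)^2 + \intt Z'(s)^2\ds$, where $Z(0)=0$ eliminates the boundary contribution from the cross term. Substituting $Z'(s)=Y(s)+e^{-s}y(0)$, applying Young's inequality, and using $\intt e^{-2s}\ds\leq 1/2$ bounds $\intt Z'(s)^2\ds$ by $2\intt Y(s)y_t(s)\ds + |y(0)|^2$. Combining this with $\intt Z(s)^2\ds + Z(t)^2 \leq 2\intt Z(s)y(s)\ds$ (obtained by doubling the identity above and dropping the non-negative $\intt Z^2\ds$ term) yields the reduced inequality for $\genk=e^{-\cdot}$, and the general case follows at once.

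The main subtlety lies in the reduction itself: the strong-positivity hypothesis is a positive-type, essentially spectral statement, while the conclusion is a concrete pointwise-in-time $L^2$ bound. Bridging the two requires exploiting the specific ODE structure of convolution with $e^{-t}$, and carefully accounting for the boundary term that surfaces exactly as $|y(0)|^2$ on the right-hand side. Once this reduction is in place, the remaining manipulations are elementary integral identities combined with Young's inequality.
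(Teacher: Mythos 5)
Your argument is correct. The paper itself gives no proof of this lemma --- it is quoted from Kawashima (Lemma 2.4) and Cannarsa--Sforza (Lemma 2.9) --- but your reconstruction is exactly the standard argument used there: reduce to the exponential kernel via the definition of strong positivity, exploit the resolvent identity $Z'+Z=y$ with $Z=e^{-\cdot}\Lconv y$ to write $\intt y^2 = \intt Z^2 + Z(t)^2 + \intt (Z')^2$, and control $\intt (Z')^2$ through $Y=e^{-\cdot}\Lconv y_t$ and the boundary term $e^{-t}y(0)$, which is precisely where the $|y(0)|^2$ on the right-hand side originates. All the intermediate identities check out (in particular $Z(0)=Y(0)=0$ kills the cross-term boundary contributions, and the hypothesis $y,y_t\in L^2_{\textup{loc}}$ makes $y$ locally absolutely continuous so that $y(0)$ and the integration by parts are legitimate), so nothing further is needed.
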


\subsection{Examples of relevant kernels}\label{sec:coverd_kernels}
Although the Abel kernel~\eqref{eq:frac_kernel} is our primary example of interest, Assumption~\ref{assu:kernel} is sufficiently general to encompass a broad class of kernels arising in the modeling of acoustic and visco-elastic phenomena. We mention here some of these kernels which verify Assumption~\ref{assu:kernel}:

\begin{itemize}
    \item The exponential kernel 
 	\begin{equation}\label{Exponential_Kernel}
\genk(t) = e^{-\beta t}, \qquad \beta>0,
\end{equation}.

\item The exponentially regularized Abel kernel (along the lines of that found in~\cite{messaoudi2007global})
	$$\genk =\frac{t^{\alpha-1}e^{-\beta t}}{\Gamma(\alpha)},$$
	with $0<\alpha<1,$ $\beta>0$.

\item The fractional Mittag-Leffler kernels encountered in the study of fractional second order wave equations in complex media in~\cite{kaltenbacher2024limiting}
$$\genk =\frac{t^{\beta-1}}{\Gamma(1-\alpha)}E_{\alpha,\beta}(-t^\alpha),$$ 
with $0<\alpha\leq\beta\leq1$, where $E_{\alpha,\beta}$ is the two-parametric Mittag-Leffler function. 

\item The polynomially decaying kernel 
	\begin{equation}\label{Polynomial_Kernel}
\genk(t) = \frac{1}{(1+t)^{p}}, \qquad p>1
\end{equation}
arising in viscoelasticity, see, e.g.,~\cite{munoz1996decay}.
\end{itemize}
%%%%%%%%%%%%%%%%%%
 
\subsection{Elliptic regularity estimates}
We recall here a useful elliptic regularity result to be used in establishing the necessary energy estimates; see, e.g.,~\eqref{eq:elliptic_reg} below.
\begin{lemma}
    Let $\Omega$ be a bounded $C^{2,1}$-regular domain. Let $\psi$ be a function  such that $\D \psi \in L^2(\Omega)$ and $\psi|_{\partial\Omega} =0$, then 
    \begin{equation}\label{low_elliptic_reg_est}
        \|\psi\|_{H^2(\Omega)} \lesssim \|\D \psi\|_{L^2(\Omega)}.
    \end{equation}

    \noindent If instead $\nabla \D \psi \in L^2(\Omega)$ and $ \psi|_{\partial\Omega} = \D \psi|_{\partial\Omega} =0 $, then 
    \begin{equation}\label{high_elliptic_reg_est}
        \|\psi\|_{H^3(\Omega)} \lesssim \|\nabla \D \psi\|_{L^2(\Omega)}.
    \end{equation}
\end{lemma}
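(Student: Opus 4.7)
The plan is to invoke classical elliptic regularity for the Dirichlet problem of the Poisson equation on $C^{2,1}$ domains and chain two applications of it for the second estimate. Both inequalities rest on the Agmon--Douglis--Nirenberg type shift theorem: if $-\Delta u = f$ in $\Omega$ with $u|_{\partial\Omega}=0$, and the boundary is $C^{k-1,1}$-regular with $k \geq 2$, then $\|u\|_{H^k(\Omega)} \lesssim \|f\|_{H^{k-2}(\Omega)}$. Here the hypothesis that $\Omega$ is $C^{2,1}$ is precisely what allows one to push this shift up to $k=3$.

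For the first bound \eqref{low_elliptic_reg_est}, I would apply the above with $k=2$ directly to $\psi$ with $f = \Delta \psi \in L^2(\Omega)$ and Dirichlet trace zero, which immediately yields $\|\psi\|_{H^2(\Omega)} \lesssim \|\Delta \psi\|_{L^2(\Omega)}$.

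For the second bound \eqref{high_elliptic_reg_est}, I would first write $w := \Delta \psi$. By the assumption $w|_{\partial\Omega}=0$, Poincar\'e's inequality gives
\begin{equation*}
\|w\|_{L^2(\Omega)} \lesssim \|\nabla w\|_{L^2(\Omega)},
\end{equation*}
hence $\|\Delta \psi\|_{H^1(\Omega)} \lesssim \|\nabla \Delta \psi\|_{L^2(\Omega)}$. Then I apply the shift theorem with $k=3$ (which is available thanks to the $C^{2,1}$ regularity of $\partial\Omega$) to $\psi$ with right-hand side $f = \Delta \psi \in H^1(\Omega)$ and homogeneous Dirichlet data, obtaining
\begin{equation*}
\|\psi\|_{H^3(\Omega)} \lesssim \|\Delta\psi\|_{H^1(\Omega)} \lesssim \|\nabla \Delta\psi\|_{L^2(\Omega)},
\end{equation*}
as desired.

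There is no real obstacle in this proof beyond cleanly citing the two required ingredients: the classical $H^k$-elliptic regularity for the Dirichlet problem on a $C^{k-1,1}$ domain (for $k=2$ and $k=3$), and Poincar\'e's inequality for $H^1_0$ functions. The compatibility condition $\Delta \psi|_{\partial\Omega}=0$ in the second statement is used only to apply Poincar\'e to $w=\Delta\psi$; it is not needed at the level of the shift estimate itself, since that estimate only uses the Dirichlet trace of $\psi$.
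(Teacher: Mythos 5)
Your proposal is correct and follows essentially the same route as the paper: the paper likewise sets $g=-\Delta\psi$, uses that $g\in H_0^1(\Omega)$ (so that $\|g\|_{H^1(\Omega)}\sim\|\nabla\Delta\psi\|_{L^2(\Omega)}$, which is exactly your Poincar\'e step), and invokes the classical elliptic shift theorem on the $C^{2,1}$ domain (Grisvard, Theorem 2.5.1.1) for both estimates. No further comment is needed.
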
 
\begin{proof}
Consider the following elliptic PDE
   \begin{equation}
   \left\{
    \begin{array}{cc}
         -\D \psi= g, & \qquad \textrm{on } \Omega,  \\
         \psi|_{\partial \Omega} = 0 & 
    \end{array}   
    \right.
   \end{equation}
   with $g\in H_0^1(\Omega)$ (since $\D \psi|_{\partial \Omega} =0$). Note that $\|g\|_{H_0^1(\Omega)} = \|\nabla \D \psi\|_{L^2(\Omega}$.  
   Thus, from classic elliptic regularity results~\cite[Theorem 2.5.1.1]{grisvard2011elliptic}, we have the estimate:
   \begin{equation}
   \|\psi \|_{H^3(\Om)} \lesssim \|g\|_{H^1(\Omega)} \sim \|\nabla \D \psi\|_{L^2(\Omega)}.
   \end{equation}
   Estimate~\eqref{low_elliptic_reg_est} is obtained in a similar manner.
\end{proof}

\subsection{A crucial functional inequality}
 We conclude the preliminaries section by introducing an important lemma that we will use to establish a uniform bound of the total energy of  the solution; see Proposition~\ref{lem::Energy_Diss_1D}. This lemma was proved in \cite[Lemma 3.7]{Strauss_1968}.

\begin{lemma}
\label{Lemma_Stauss} Let $M=M(t)$ be a non-negative continuous function
satisfying the inequality
\begin{equation}  
M(t)\leq c_1+c_2 M(t)^{\kappa},
\end{equation}
in some interval containing $0$, where $c_1$ and $c_2$ are positive
constants and $\kappa>1$. If $M(0)\leq c_1$ and
\begin{equation}
c_1c_2^{1/(\kappa-1)}<(1-1/\kappa)\kappa^{-1/(\kappa-1)},
\end{equation}
then in the same interval
\begin{equation}
M(t)<\frac{c_1}{1-1/\kappa}.
\end{equation}
\end{lemma}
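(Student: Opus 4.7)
The plan is to exploit the shape of the polynomial $F(x) := c_1 + c_2 x^{\kappa} - x$, since the hypothesis $M(t)\leq c_1+c_2 M(t)^\kappa$ is equivalent to saying that the continuous curve $t\mapsto M(t)$ stays in the set $\{x\geq 0:F(x)\geq 0\}$. A standard continuity/connectedness argument will then confine $M(t)$ to the connected component of this set containing $M(0)$, which is where the quantitative bound will come from.

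First I would analyze $F$. Since $\kappa>1$, the derivative $F'(x)=c_2\kappa x^{\kappa-1}-1$ vanishes at a unique positive critical point $x^\star=(c_2\kappa)^{-1/(\kappa-1)}$, which is a strict minimum. A direct computation gives
\begin{equation*}
F(x^\star)=c_1-(1-1/\kappa)\,x^\star,
\end{equation*}
and the smallness assumption $c_1 c_2^{1/(\kappa-1)}<(1-1/\kappa)\kappa^{-1/(\kappa-1)}$ is precisely equivalent to $c_1<(1-1/\kappa)\,x^\star$, hence $F(x^\star)<0$. Combined with $F(0)=c_1>0$ and $F(x)\to+\infty$ as $x\to\infty$, this produces two zeros $0<a<x^\star<b$ of $F$, with $F\geq 0$ on $[0,a]\cup[b,\infty)$ and $F<0$ on $(a,b)$.

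Next I would locate $M(0)$. The inequality $M(0)\leq c_1$ combined with $F(M(0))\geq 0$ and $c_1<x^\star$ forces $M(0)\in[0,a]$ (since $F$ is negative on $(a,x^\star]$). By continuity of $M$ and the fact that $M$ cannot cross the forbidden interval $(a,b)$, the standard supremum argument (set $T^\star=\sup\{t\geq 0:M(s)\leq a\ \forall s\in[0,t]\}$ and use intermediate values) yields $M(t)\leq a$ throughout the given interval.

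Finally, I would compare $a$ with the claimed bound $c_1/(1-1/\kappa)$. Substituting $\mu:=c_1/(1-1/\kappa)$ into $F$ gives, after a short manipulation,
\begin{equation*}
F(\mu)=\mu\bigl(c_2\mu^{\kappa-1}-\tfrac{1}{\kappa}\bigr),
\end{equation*}
which is strictly negative iff $\mu<x^\star$, and this last inequality is again just a rewriting of the smallness hypothesis. Therefore $\mu\in(a,b)$, so in particular $a<\mu$, which combined with $M(t)\leq a$ gives the desired strict bound $M(t)<c_1/(1-1/\kappa)$. The main (and essentially only) obstacle is the bookkeeping that translates the hypothesis $c_1 c_2^{1/(\kappa-1)}<(1-1/\kappa)\kappa^{-1/(\kappa-1)}$ into the two equivalent statements $F(x^\star)<0$ and $F(\mu)<0$; once those are in hand, the continuity argument is routine.
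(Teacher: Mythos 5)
Your proof is correct and complete: the computation of the critical point $x^\star=(c_2\kappa)^{-1/(\kappa-1)}$, the equivalence of the smallness hypothesis with $F(x^\star)<0$ and with $F\bigl(c_1/(1-1/\kappa)\bigr)<0$, and the connectedness argument confining $M$ to $[0,a]$ all check out. The paper itself does not prove this lemma but only cites \cite[Lemma 3.7]{Strauss_1968}; your argument is the standard barrier/continuity proof of that result, so there is nothing further to reconcile.
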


%%%%%%%%%%%%%%%%%%

\section{Main results}\label{sec:Main_Thm}
 In this section, we present the main results of this paper and explain the strategy of the proof.
For the convenience of the analysis, we rearrange equation \eqref{eq:first_eq_genk} as follows
\begin{subequations}\label{Main_system}
\begin{equation}
	%\left\{
		\tau \psittt + \psitt- c^2\Delta \psi-\tau c^2 \Delta \psit- \delta \genk \Lconv\Delta (\tau\psitt+ \psit)= \sigma \nabla \psi \cdot \nabla \psi_t + f ,
	%\right.  
    \label{System_New}
\end{equation}
where the source term here $f = \ddt \tilde f\in L^1(\R^+; \Honetwo)$ with $\tilde f \in W^{1,1}(\R^+;\Honetwo$. We assume the following on the  initial data
\begin{equation}  \label{Initial_Condition_2}
	\psi(t=0)= \psi_0 \in \Honethree ,\qquad \psit(t=0)=\psi_1=-\frac{1}{\tau}\psi_0 ,\qquad \psitt(t=0)=\psi_2 \in \Honetwo,
\end{equation}
 with the additional requirement that $\nabla\psi_0| _{\partial{\Omega}}=0$. We furthermore impose the zero-Dirichlet boundary condition:
\begin{equation}  \label{Boundary_Condition_3}
	\psi|_{\partial \Omega} = 0.
\end{equation}
\end{subequations}
\begin{remark}[On the requirements on the data]
The seemingly peculiar form of the requirement on the source term $f$ has to do with our intention to integrate equation \eqref{System_New} in time in Section~\ref{sec:nabla_v_dmp}. We point out also that requiring special form of the initial data is not uncommon when studying fractional/nonlocal-in-time wave equations; e.g.,  \cite{kaltenbacher2022time,meliani2023unified}, or even local-in-time Moore--Gibson--Thompson equations; e.g., \cite{Chen_Ikehata_2021}  where some of the initial data are set to $0$. In our work, the initial data restriction leads to
$$\left(\tau \psit +\psi\right) (t=0) =0$$
which is used when integrating \eqref{System_New} in time.
\end{remark}

Let us introduce the space of solutions used in the statement of Theorem~\ref{thm:main_thm}:
\begin{equation}\label{def:space_of_solutions}
	\begin{aligned}
	\mathcal{U}(0,T) = \Bigg\{\psi \in W^{3,1}(0,T; H_0^1(\Omega)) \cap  W^{2,\infty}(0,T;\Honetwo)& \cap   W^{1,\infty}(0,T;\Honethree)\Bigg| \\ \sigma \nabla\psi\cdot\nabla\psit \in H_0^1(\Omega)\Bigg\}.
	\end{aligned}
\end{equation}

\begin{theorem}[Global existence for small data]\label{thm:main_thm}
Let $\Omega$ be a $C^{2,1}$ domain. Let Assumption~\ref{assu:kernel} on the kernel hold. Then, there exists a small $m>0$ such that if the initial data and source term verify  

\begin{equation}
    \|\psi_0\|_{H^3(\Om)}+\|\psi_2\|_{H^2(\Om)}+\|\tilde{f}\|_{W^{1,1}(\R^+;H^2(\Om)(\Omega))} \leq m,
\end{equation}
    Then the local solution $\psi\in \mathcal{U}$ to \eqref{System_New}--\eqref{Boundary_Condition_3} exists globally in time.
\end{theorem}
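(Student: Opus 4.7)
The plan is to combine a local existence result (provided in the appendix) with a bootstrap argument in a carefully designed energy norm, closing the estimates via the smallness of the data through Strauss's Lemma~\ref{Lemma_Stauss} rather than a Gronwall-type inequality (which is unavailable because $\genk\notin L^1(\R^+)$). First, I would introduce an energy functional
\begin{equation*}
E(t)=\sup_{s\in[0,t]}\Bigl(\|\psittt(s)\|_{L^2}^{2}+\|\psitt(s)\|_{H^2}^{2}+\|\psit(s)\|_{H^3}^{2}+\|\psi(s)\|_{H^3}^{2}\Bigr)+\mathcal{D}(t),
\end{equation*}
where $\mathcal{D}(t)$ collects the positive-kernel dissipation terms of the form $\int_0^t(\genk\Lconv\nabla\D(\tau\psitt+\psit))(s)\cdot\nabla\D(\tau\psitt+\psit)(s)\ds$ and its lower-order analogues, which are non-negative by Assumption~\ref{assu:kernel} and Lemma~\ref{thm:strong_pos_ker}. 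The bootstrap hypothesis will be $E(t)\leq M$ on some interval $[0,T^\ast)$ with $M$ small, and the aim is to prove $E(t)\leq c_1+c_2 E(t)^\kappa$ with $\kappa>1$, so that Lemma~\ref{Lemma_Stauss} closes the argument as soon as $c_1,c_2$ are controlled by the smallness parameter $m$.

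Second, I would derive three sets of energy estimates. The \emph{basic} estimate is obtained by testing \eqref{System_New} with $\tau\psitt+\psit$ in $L^2(\Omega)$, which produces a coercive quadratic form thanks to the critical-case identity for the principal symbol and extracts $L^2_t L^2$ dissipation on $\nabla(\tau\psitt+\psit)$ via the strongly-positive kernel (Lemma~\ref{lemma_L2_control}). The \emph{higher-order} estimates are obtained by differentiating the equation once in time and testing with $\tau\psittt+\psitt$, and by applying $\D$ to the equation and testing with $\tau\D\psit+\D\psi$ (or $-\D(\tau\psittt+\psitt)$ after one time differentiation), using the elliptic regularity bounds~\eqref{low_elliptic_reg_est} and~\eqref{high_elliptic_reg_est} to convert $\D$-norms to $H^2, H^3$-norms on the compatible test-function classes in $\Honetwo,\Honethree$. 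At this stage the right-hand side contains both the source $f=\partial_t\tilde f$ and the quadratic nonlinearity $2\sigma\nabla\psi\cdot\nabla\psit=\sigma\partial_t|\nabla\psi|^2$; these are absorbed by Sobolev embeddings $H^2(\Omega)\hookrightarrow L^\infty(\Omega)$ (valid since $d\leq 3$) at the cost of a factor $E(t)^{1/2}$, producing contributions of the form $E(t)^{3/2}$ and higher.

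Third, and crucially, I would produce one additional estimate by \emph{first integrating \eqref{System_New} in time} from $0$ to $t$. Here the initial-data choice $\psi_1=-\tau^{-1}\psi_0$ is essential: it forces $(\tau\psit+\psi)(0)=0$, and, combined with $f=\partial_t\tilde f$, the time-integrated equation involves only current-time quantities of $\psittt,\psitt,\psit,\psi$ on the left and, on the right, the antiderivative of the nonlinearity, namely $\sigma|\nabla\psi|^2-\sigma|\nabla\psi_0|^2$, together with $\tilde f-\tilde f(0)$. Testing the time-integrated equation appropriately and exploiting Lemma~\ref{lemma_L2_control} then yields pointwise-in-$t$ $L^2$-control on $\nabla\D(\tau\psit+\psi)$ and $\nabla\D(\tau\psitt+\psit)$ from the kernel dissipation, so that the nonlinear contribution is bounded by $\|\nabla\psi\|_{L^\infty}^2\|\nabla\psi\|_{L^2}^2\lesssim E(t)^2$. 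Combining all three sets of estimates yields exactly the structure $E(t)\leq C m^2+C E(t)^{3/2}$ required by Lemma~\ref{Lemma_Stauss}, whose smallness condition is met provided $m$ is chosen sufficiently small.

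The main obstacle, in my view, is not the algebra of the energy identities but the fact that a direct Gronwall argument is unavailable: the kernel is neither exponentially decaying nor in $L^1(\R^+)$, so one cannot absorb $\int_0^t(\genk\Lconv y)(s)y(s)\ds$ into a Gronwall inequality. Everything hinges on keeping this convolution term on the \emph{positive} (dissipative) side of the estimate via strong positivity, and on exploiting the derivative structure $\sigma\partial_t|\nabla\psi|^2$ through the time-integrated equation so that the nonlinearity never needs a time derivative to be controlled. Once the resulting nonlinear inequality for $E(t)$ is closed by Strauss's lemma on any interval $[0,T^\ast)$ on which the local solution exists, a standard continuation argument based on the local well-posedness statement in the appendix extends $\psi$ to all of $[0,\infty)$, concluding the proof.
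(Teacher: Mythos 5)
Your strategy is essentially the paper's: the three pillars --- (i) time-integration of \eqref{System_New} exploiting $\psi_1=-\tau^{-1}\psi_0$, $f=\partial_t\tilde f$ and $2\sigma\nabla\psi\cdot\nabla\psit=\sigma\partial_t|\nabla\psi|^2$, (ii) strong positivity of the kernel via Lemma~\ref{lemma_L2_control}, and (iii) closing a nonlinear inequality with Lemma~\ref{Lemma_Stauss} instead of Gronwall --- are exactly the ones used. Two points in your write-up, however, would not survive as stated. First, Lemma~\ref{lemma_L2_control} does \emph{not} yield pointwise-in-$t$ control of $\nabla\D(\tau\psit+\psi)$ or of $\nabla\D(\tau\psitt+\psit)$: it yields only $\int_0^t\|y(s)\|^2\ds$ with $y=\nabla\D(\tau\psit+\psi)$, and it needs \emph{both} quadratic forms $\int_0^t(\genk\Lconv y)\,y\,\ds$ and $\int_0^t(\genk\Lconv y_t)\,y_t\,\ds$ on the dissipative side. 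This is precisely why the paper must add the energy identity of the original equation (which produces the $y_t$-form) to that of the time-integrated equation (which produces the $y$-form); neither alone suffices. Consequently the nonlinear term of the integrated equation is not bounded by $\|\nabla\psi\|_{L^\infty}^2\|\nabla\psi\|_{L^2}^2$ pointwise --- that bound, integrated over $(0,t)$, would grow linearly in $t$. The paper instead bounds it by $\||\nabla\psi|^2\|_{L^1(0,t;H^2)}^2\lesssim\|\nabla\psi\|_{L^2(0,t;H^2)}^4$, i.e.\ by the square of the $L^2_tL^2$ dissipation, which is what makes the constant time-uniform.

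Second, your energy contains $\sup_s\|\psittt(s)\|_{L^2}^2$ and you propose obtaining it by differentiating the equation in time and testing with $\tau\psittt+\psitt$. This exceeds the regularity of the solution class $\mathcal{U}$ (where $\psittt$ is only $W^{3,1}$-in-time valued in $H_0^1$), requires extra compatibility data for $\psi_{tttt}$, and produces the singular forcing term $\genk(t)\,\D(\tau\psi_2+\psi_1)$ for the Abel kernel. The paper avoids time-differentiation entirely: it raises the \emph{spatial} order instead, testing with $\D^2(\psit+\tau\psitt)$ and $\D^2(\psi+\tau\psit)$ and converting the resulting $\D$- and $\nabla\D$-quantities into $H^2$/$H^3$ norms by elliptic regularity. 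If you replace your time-differentiated estimate by this spatial one (which you mention parenthetically), and correct the first point above, your argument matches the paper's proof.
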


\subsection{Discussion of the main result}
 Before moving onto the proof, we briefly discuss the statements made above in Theorems~\ref{thm:main_thm}. 
 \begin{itemize}
 \item Theorem~\ref{thm:main_thm} proves the existence of global-in-time solution to \eqref{System_New}--\eqref{Boundary_Condition_3} for small enough data. The main difficulty in showing a global existence result lies in proving that the nonlocal term $\delta \genk\Lconv\D(\tau \psitt +\psit)$  provides sufficient control to ensure that the solution does not blow-up.  We recall a notable existing result in the direction of global existence for the local JMGT equation with $\delta =0$ in the case of Westervelt-type nonlinearit (see \eqref{JMGT_Eq_Wes}) is a conditional regularity result due to Nikoli\'c and Winkler~\cite{nikolic2024infty}.   For the nonlocal JMGT equation with a general convolution kernels $\genk$ and Westervelt nonlinearity, a conditional regularity result was established recently in \cite{meliani2024well}. On the other hand, long-term behaviour of nonlocal equations is, in general, poorly understood~\cite{fritz2022equivalence}. The combination of these facts, makes the study of \eqref{System_New}--\eqref{Boundary_Condition_3} challenging and interesting.
  
 \item  Showing the large-time asymptotic behavior solutions is an interesting problem which strongly depends on the type of the kernel $\genk$. For exponentially decaying kernel, it is possible to prove an exponential decay of the solution in particular cases which do not cover \eqref{eq:first_eq_genk}; see, e.g., \cite{meliani2025energy,messaoudi2007global}. However, proving decay rates for more general classes of kernels is more difficult. This is the case only in particular instances of lower-order equation with a simpler structure compared to \eqref{eq:first_eq_genk}; see, e.g.,~\cite{vergara2008lyapunov,vergara2015optimal,fritz2022equivalence}. Studying the asymptotic behavior of nonlinear equations with higher-order derivatives is a challenging task which remains open for the fractional JMGT equation at hand.
 \end{itemize}

%%%%%%%%%%%%
The proof of Theorem~\ref{thm:main_thm} is given in  Section \ref{sec:proof_Main_Theorem}.

\section{Proof of Theorem~\ref{thm:main_thm}}\label{sec:proof_Main_Theorem}
The proof of Theorem \ref{thm:main_thm} will be done though a bootstrap argument. We define first the energy associated to \eqref{Main_system} as 
\begin{equation}\label{Total_Energy}
    \mathbf{E}(t):=\sup_{\nu \in (0,t)} E_1(\nu) + E_2(\nu)
\end{equation}
where  
\begin{equation}  
	E_{1}(t):=\frac{1}{2}\int_{\Omega}\left( |\D(\psit+\tau \psitt)|^{2}+c^2|\nabla \D  (\psi+\tau \psit)|^{2}\right) \dx
\end{equation}
and 
\begin{equation}
    E_2(t):=\frac{1}{2}\int_{\Omega}\left( |\D(\psi+\tau \psit)|^{2}+c^2|\nabla \D  (\xi+\tau \psi)|^{2}\right) \dx,
\end{equation}
where $\xi = \intt \psi \ds + \xi_0$ with $\xi_0$ defined through the elliptic problem~\eqref{inital_cond_xi_0} below.

We define the  associated dissipation rate 
\begin{equation}\label{D_Term}
    \mathbf{D}(t):= \frac{\delta }{\eta} \intt \|\nabla \D (\tau  \psit +  \psi) \|_{L^2(\Omega)}^2. 
\end{equation}
We introduce  
\begin{equation}
    \mathbf{Y}(t):= \mathbf{E}(t) + \mathbf{D}(t).  
\end{equation}
Our main goal is to prove by a continuity argument that   $\mathbf{Y}(t)$ is  uniformly bounded for all time provided that  the initial energy $\mathbf{E}(0)=\mathbf{Y}(0)$  and source term $\|\tilde f\|_{W^{1,1}(\R^+,L^2(\Omega))}$ are sufficiently small.
Such a functional will be shown to verify the  inequality 
\begin{equation}\label{Main_Y_Estimate}
\mathbf{Y}(t)\lesssim  \mathbf{Y}(0) + \|\tilde f\|_{W^{1,1}(\R^+; H^2(\Omega))}+\mathbf{Y}^{2}(t).
\end{equation}
 where the hidden constant in \eqref{Main_Y_Estimate} is independent of $t$.
Equipped with the above inequality, we can show by using Lemma \ref{Lemma_Stauss} that  the energy of the solution is uniformly bounded as time goes  to infinity. Hence, our primary goal is to prove \eqref{Main_Y_Estimate}. To do so, it is natural to compute the time evolution of $\mathbf{E}(t)$.

Our starting point in the analysis is  the following energy estimate:

\begin{lemma}\label{lemma_E1_identitty}
	We have  for all $t\geq 0$, the identity
	\begin{equation}
		E_{1}(t) + \delta  \int_0^t   \int_{\Omega} \genk\Lconv \nabla \D(\tau  \psitt +\psit) \cdot  \nabla \D  (\tau \psitt + \psit)\dx\ds \leq E_1(0)+\mathrm{R}_1,  \label{Energy_Indentity}
	\end{equation}
	where
	\begin{equation}
		\mathrm{R}_1:=\int_0^t\int_{\Omega}2\sigma \D (\nabla \psi \cdot \nabla \psit) \cdot 
		\D \left(\psit+\tau  \psitt\right) \dx\ds   + \int_0^t\int_{\Omega} \D f\, \cdot \D \left( \psit+\tau \psitt\right) \dx\ds.
	\end{equation} 
\end{lemma}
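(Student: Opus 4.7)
First, I would introduce the auxiliary variable $v := \tau\psit + \psi$, so that $v_t = \tau\psitt+\psit$ and $v_{tt} = \tau\psittt+\psitt$; equation \eqref{System_New} then rewrites in the compact second-order form
\begin{equation*}
v_{tt} - c^2\,\D v - \delta\, \genk\Lconv \D v_t \;=\; 2\sigma\,\nabla\psi\cdot\nabla\psit + f,
\end{equation*}
and the energy in the lemma reads $E_1(t) = \tfrac12\|\D v_t\|_{\Ltwo}^2 + \tfrac{c^2}{2}\|\nabla\D v\|_{\Ltwo}^2$, which is precisely the fourth-order natural energy for this PDE.

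The plan is then to test this rewritten equation against $\D^2 v_t$---carried out on a Galerkin/mollification of the local solution provided by Appendix~\ref{App:Appendix}, and passed to the limit using the $\mathcal U$-regularity---and integrate by parts in each term. Using the Dirichlet and compatibility traces carried by elements of $\mathcal U$, the three left-hand side terms become
\begin{align*}
\intO v_{tt}\,\D^2 v_t\dx &= \ddt\tfrac12\|\D v_t\|_{\Ltwo}^2, \qquad -c^2\intO \D v\,\D^2 v_t\dx = \ddt\tfrac{c^2}{2}\|\nabla\D v\|_{\Ltwo}^2,\\
-\delta\intO (\genk\Lconv \D v_t)\,\D^2 v_t\dx &= \delta\intO (\genk\Lconv\nabla\D v_t)\cdot\nabla\D v_t\dx,
\end{align*}
while the right-hand side contributes $\intO \D\!\left(2\sigma\,\nabla\psi\cdot\nabla\psit+f\right)\D v_t\dx$ after two further integrations by parts. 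Integrating in time from $0$ to $t$ then yields \eqref{Energy_Indentity} as an equality; the stated ``$\leq$'' is immediate.

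The step I expect to require the most care is verifying that every boundary integral produced across the four integrations by parts genuinely vanishes. This is delicate because $E_1$ sits at the level of three spatial derivatives, so even a Laplacian of the quadratic nonlinearity has to be paired with $\D v_t$ on $\partial\Omega$. The cancellation is secured by three ingredients: (i) the Dirichlet/compatibility conditions carried by $\psi\in W^{1,\infty}(0,T;\Honethree)$ and $\psit,\psitt\in L^\infty(0,T;\Honetwo)$, which kill the boundary contributions coming from the linear terms; (ii) the regularity $\tilde f\in W^{1,1}(\R^+;\Honetwo)$, which guarantees $f(\cdot,t)\in\Honetwo$; and, most importantly, (iii) the condition $\sigma\,\nabla\psi\cdot\nabla\psit\in H_0^1(\Omega)$ explicitly built into the solution space \eqref{def:space_of_solutions}, which is precisely what is needed to kill the boundary trace produced when a Laplacian is integrated off the nonlinear term. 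Once this bookkeeping is in place, the identity is a routine fourth-order energy identity, modulo keeping track of signs.
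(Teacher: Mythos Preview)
Your proposal is correct and follows essentially the same route as the paper: test \eqref{System_New} against $\D^2(\tau\psitt+\psit)$ (justified at the Galerkin level and passed to the limit), integrate by parts, and use the trace conditions $f\in\Honetwo$ and $\sigma\nabla\psi\cdot\nabla\psit\in H_0^1(\Omega)$ built into $\mathcal U$ to kill the boundary contributions. Your introduction of the auxiliary variable $v=\tau\psit+\psi$ is a purely notational convenience that makes the second-order structure explicit, but the underlying computation and the handling of boundary terms are identical to the paper's.
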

\begin{proof}
     We intend to test \eqref{System_New} by $ \D^2 (\psit+\tau \psitt)$ which lacks the sufficient regularity to be a valid test function. Therefore, similarly to the idea of, e.g., \cite[Theorem 8.3]{conti2021Moore} and \cite[Lemma 5.2]{meliani2025energy}, we will carry out the testing in a semi-discrete setting (Galerkin approximation), such that the differential inequality~\eqref{Energy_Indentity} will make sense at first for the Galerkin approximations of the equation. Taking the limit in the Galerkin procedure will then ensure that the inequality also holds for the solution of the infinite dimensional problem.

	Thus, multiplying \eqref{System_New} by $ \D^2 (\psit+\tau \psitt)$ and integrating by parts
	over $\Omega$, we obtain
	\begin{equation} 
    \begin{multlined}
		\frac{1}{2}\ddt\int_{\Omega}|\D (\psit+\tau \psitt)|^{2} \dx+  \frac{1}{2}\ddt\int_{\Omega}c^2|\nabla \D  (\psi+\tau \psit)|^{2} \dx \\ + \delta  \int_{\Omega} \genk\Lconv \nabla \D(\tau  \psitt +\psit) \cdot  \nabla \D (\tau \psitt + \psit)\dx \notag \\
		=  \int_{\Omega}2\sigma (\nabla \psi \cdot \nabla \psit)  
		\D^2 \left(  \psit+\tau  \psitt\right) \dx + \int_{\Omega} f \D^2 \left( \psit+\tau \psitt\right) \dx.  \label{w_estimate}
        \end{multlined}
	\end{equation}% 
    We next integrate by parts the right-hand side using the fact that $f \in \Honetwo$ and thus $f|_{\partial \Omega} = 0$ and  recalling also through the PDE \eqref{Main_system} and the function space $\mathcal{U}$ (see \eqref{def:space_of_solutions}), that 
    $2 \sigma \nabla \psi\cdot \nabla \psit |_{\partial \Omega} = \D (\tau \psitt +\psit)|_{\partial \Omega}= 0$, which yields after  integrating over   $(0,t)$  the desired estimate.
\end{proof}

\subsection{Control of $\|\nabla \psit\|_{L^2(0,T;L^2(\Omega))}$}\label{sec:nabla_v_dmp}
    One of the main challenges in closing the estimates for the nonlinear equation is controlling the norm  $\|\nabla \psit\|_{L^2(0,T;L^2(\Omega))}$; see proof of Proposition~\ref{lem::Energy_Diss_1D}. Since, we are in the critical case (in terms of the coefficient \cite{kaltenbacher2012well}), we need to use crucially the properties of the kernel. In particular, we need to  rely on Lemma~\ref{lemma_L2_control} together with an approach inspired by \cite[Section 1.9]{lions1965some}.
	 The idea is to exploit the structure of the nonlinearity which can be written as a derivative in time: $2\sigma \nabla \psi \cdot \nabla\psit = \sigma(|\nabla\psi|^2)_t$. Then, by averaging in time, we can use \eqref{ineq:L2_lemma} to derive \eqref{full_energy_ineq} through appropriate energy estimates.

	Define 
\begin{equation}\label{xi_Definition}
\xi = \xi_0 + \intt \psi(s)\ds, 
\end{equation}
	with $\xi_0$ being the solution to the following elliptic problem on $\Omega$:
       \begin{equation}\label{inital_cond_xi_0}
   \left\{
    \begin{array}{rlc}
         -c^2\D\xi_0 &= \sigma |\nabla\psi_0|^2 -\tau \psi_2 -\psi_1 + \tau c^2 \Delta \psi_0 + \tilde f(0), & \qquad \textrm{on } \Omega,  \\
         \xi_0&= 0 &  \qquad \textrm{on } \partial\Omega, 
    \end{array}   
    \right.
   \end{equation}
    Note that due to the regularity of the initial data and source term~\eqref{Main_system}, the right-hand side is in $H_0^1(\Omega)$. Thus, there exists a unique $\xi_0 \in \Honethree$ which solves \eqref{inital_cond_xi_0}; see, e.g., \cite[Theorem 2.5.1.1]{grisvard2011elliptic}.

	Then $\xi$ verifies the following equation
	\begin{equation}\label{eq:xi}
		\tau \xi_{ttt} + \xi_{tt} -c^2\Delta \xi-\tau c^2 \Delta \xi_t-\delta   \genk\Lconv \Delta (\tau \xi_{tt}+ \xi_t) =\sigma \nabla |\xi_{t}|^2 + \tilde f.
\end{equation}
This equation is obtained by integrating equation \eqref{eq:first_eq_genk} in time on $(0,t)$, keeping in mind \eqref{xi_Definition} and using the following relations:
\begin{equation}
	\begin{aligned}
		%&\xi  &= \intt \psi \ds + \xi_0, \\
		&\xi_t =  \psi = \intt \psit\ds + \psi_0  ,\qquad 
		\xi_{tt} =  \psit = \intt \psitt\ds +\psi_1,%-\frac1{\tau} \psi_0 ,
        \\ & 		\xi_{ttt} =  \psitt = \intt \psittt\ds  + \psi_2, \qquad \genk \Lconv (\tau\xi_{tt} + \xi_t) =  \genk \Lconv (\tau\psi_t+\psi) = \intt \genk \Lconv (\tau\psitt+\psit) \ds,\qquad  \\&
		 \sigma|\nabla \xi_{t}|^2 = \sigma|\nabla\psi|^2 =\intt 2\sigma\nabla \psit\cdot \nabla \psi \ds + \sigma |\nabla \psi_0|^2.
	\end{aligned}
\end{equation}
We also used the  initial condition on $\xi_0$ given in \eqref{inital_cond_xi_0} to eliminate the time-independent residual terms that results from integrating \eqref{eq:first_eq_genk} with respect to time. 

To simplify the study of the equation of $\xi$, we reformulate \eqref{eq:xi} into the following system of equations:
\begin{equation}
	\left\{
	\begin{array}{ll}
		\xi_{t}=\psi,\vspace{0.2cm} &  \\
        z = \tau \xi_t + \xi, \vspace{0.2cm} &  \\
		z_{tt} - c^2 \D z - \delta \genk \Lconv \D z_t= \sigma |\nabla \psi|^2 +\tilde f , &
	\end{array}   
	\right.  \label{System_New_Integrated}
\end{equation}

Exploiting the similarity in structure between \eqref{System_New} and \eqref{System_New_Integrated}, we give a first estimate for the functional $E_2$, which we recall is defined as  
\begin{equation}  
    E_2(t)=\frac{1}{2}\int_{\Omega}\left( |\D(\psi+\tau \psit)|^{2}+c^2| \nabla \D (\xi+\tau \psi)|^{2}\right) \dx,
	\label{E1_analogous_def}
\end{equation}
We then have the following estimate. 
\begin{lemma}\label{lemma_E1L_identitty}
	We have  for all $t\geq 0$, the identity
	\begin{equation}
		E_2(t) + \delta  \int_0^t   \int_{\Omega} \genk\Lconv \nabla \D(\tau  \psit+\psi) \cdot  \nabla \D (\tau \psit+ \psi)\dx\ds \leq E_2(0)+\mathrm{R}_2,  \label{Energy_LOW_Indentity}
	\end{equation}
	where
	\begin{equation}
		\mathrm{R}_2:= -\int_0^t\int_{\Omega} \sigma \nabla |\nabla \psi|^2 \cdot
		\nabla \D \left( \psi+\tau \psit\right) \dx\ds + \int_0^t\int_{\Omega}  \D \tilde f \cdot \D \left( \psi+\tau \psit\right) \dx\ds.
	\end{equation} 
\end{lemma}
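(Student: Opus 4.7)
The plan is to exploit the structural analogy between the third equation of \eqref{System_New_Integrated}, namely
\begin{equation*}
    z_{tt} - c^2 \Delta z - \delta \genk \Lconv \Delta z_t = \sigma |\nabla \psi|^2 + \tilde f, \qquad z = \xi + \tau \psi,
\end{equation*}
and the original equation \eqref{System_New}, so that the argument of Lemma \ref{lemma_E1_identitty} transfers essentially verbatim after shifting the time differentiation by one order. Since $z_t = \psi + \tau \psi_t$ and $\nabla \Delta z = \nabla \Delta(\xi + \tau \psi)$, the energy can be rewritten compactly as
\begin{equation*}
    E_2(t) = \frac{1}{2}\int_\Omega \bigl(|\Delta z_t|^2 + c^2 |\nabla \Delta z|^2\bigr) \dx.
\end{equation*}

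The central step is to test the $z$-equation with $\Delta^2 z_t = \Delta^2(\psi + \tau \psi_t)$ and integrate over $\Omega$. As in the proof of Lemma \ref{lemma_E1_identitty}, this test function is not admissible at the level of the limit problem, so I would make the computation rigorous within a Galerkin approximation, passing to the limit in the resulting inequality by weak/weak-$*$ lower semicontinuity. Two integrations by parts on the first two terms produce $\frac{\textup{d}}{\textup{d}t} E_2(t)$, while the convolution term becomes $\delta \int_\Omega \genk \Lconv \nabla \Delta z_t \cdot \nabla \Delta z_t \dx$ after commuting $\nabla$ through the (temporal) convolution; expanding $z_t = \psi + \tau\psi_t$ recovers the dissipation expression on the left-hand side of \eqref{Energy_LOW_Indentity}.

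On the right-hand side, the forcing $\tilde f(t) \in \Honetwo$ permits two integrations by parts, yielding $\int_\Omega \Delta \tilde f \cdot \Delta(\psi + \tau \psi_t) \dx$, which is the second contribution to $\mathrm{R}_2$. For the nonlinearity, a single integration by parts gives $-\sigma \int_\Omega \nabla |\nabla \psi|^2 \cdot \nabla \Delta(\psi + \tau \psi_t) \dx$, matching the first contribution. Integrating over $(0,t)$ then yields \eqref{Energy_LOW_Indentity}.

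The main obstacle, as in Lemma \ref{lemma_E1_identitty}, is the careful verification that every integration by parts loses no surface integral. The trace $\Delta(\psi + \tau \psi_t)|_{\partial \Omega} = 0$ follows directly from $\psi \in W^{1,\infty}(0,T;\Honethree)$; the trace $|\nabla \psi|^2|_{\partial \Omega} = 0$ is obtained by integrating $\partial_t |\nabla \psi|^2 = 2\nabla \psi \cdot \nabla \psi_t$ in time, combined with the built-in constraint $\sigma \nabla \psi \cdot \nabla \psi_t \in H_0^1(\Omega)$ from the definition of $\mathcal{U}$ and the boundary data $\nabla \psi_0|_{\partial \Omega} = 0$; finally $\tilde f|_{\partial \Omega} = 0$ is immediate from $\tilde f(t) \in \Honetwo$. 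Once these traces are in hand, the computation proceeds in exact parallel to the higher-order identity of Lemma \ref{lemma_E1_identitty}.
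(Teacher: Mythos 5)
Your proposal is correct and follows essentially the same route as the paper: test the integrated $z$-equation with $\Delta^2 z_t$, justify the computation via Galerkin approximation, and verify that the traces $\Delta(\psi+\tau\psi_t)$, $|\nabla\psi|^2$, and $\tilde f$ vanish on $\partial\Omega$ so no surface integrals survive. The only cosmetic difference is that the paper obtains the $\frac{c^2}{2}\frac{\textup{d}}{\textup{d}t}\int_\Omega|\nabla\Delta(\xi+\tau\psi)|^2\,\textup{d}x$ contribution by separately testing the relation $z_t=\psi+\tau\psi_t$ with $-c^2\Delta^3(\psi+\tau\psi_t)$ and summing, whereas you fold it directly into $\frac{\textup{d}}{\textup{d}t}E_2$ — the same computation organized slightly differently.
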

\begin{proof}
	Multiplying the third equation of \eqref{System_New_Integrated} by $ \D^2( \psi+\tau \psit) =  \D^2 z_t$ and integrating by parts
	over $\Omega$, we obtain
	\begin{eqnarray}
		\frac{1}{2}\ddt\int_{\Omega}|\D (\psi+\tau \psit)|^{2}dx + c^2 \int_{
			\Omega} \nabla \D( \xi + \tau \psi) \cdot \nabla \D (\psi + \tau \psit) \notag \\
            +\delta\int_{\Omega} \genk\Lconv \nabla \D (\psi + \tau \psit) \cdot \nabla \D (\psi+\tau \psit) \dx\notag \\
		=-\int_{\Omega} \sigma \nabla |\nabla \psi|^2 \cdot
		\nabla \D \left( \psi+\tau \psit\right) \dx +\int_{\Omega}  \D \tilde f \cdot \D \left( \psi+\tau \psit\right) \dx,  \label{w_LOW_estimate}
	\end{eqnarray}
     where we exploited the \emph{a priori} information on the vanishing boundary value of $\psi,$ $\D \psi,$ $\psi_t,$ and $ \D \psit$; see solution space~\eqref{def:space_of_solutions}. Due to the choice of data, we know that $\D \tilde f|_{\partial\Omega} =0$. Notice moreover that also $ \sigma |\nabla \psi|^2\Big|_{\partial \Omega} =0= \D \xi|_{\partial{\Omega}} $, which is due to the fact that:
    \begin{equation}
    \sigma |\nabla \psi(t)|^2\Big|_{\partial \Omega} = \int_0^t 2 \sigma \nabla \psi\cdot \nabla \psit \Big|_{\partial \Omega} \ds + \sigma  |\nabla \psi_0|^2\Big|_{\partial \Omega} =0
    \end{equation}
    due to the initial data choice and the space of solutions~\eqref{def:space_of_solutions}. The same reasoning is used for $\D \xi$.

	Exploiting the first equation in \eqref{System_New_Integrated} (adding $\tau \psit$ to both sides) and testing with $ - c^2 \D^3 ( \psit+\tau \psi)$, we have 
	\begin{equation}
		\begin{aligned}
			&\frac{1}{2}\ddt\int_{\Omega}c^2|\nabla \D (\xi+\tau \psi)|^{2}dx = c^2 \int_{
			\Omega} \nabla \D( \xi + \tau \psi) \cdot \nabla \D (\psi + \tau \psit) .  \label{v_ps_derivative}
		\end{aligned}
	\end{equation}
	Summing up \eqref{w_LOW_estimate} and \eqref{v_ps_derivative} and integrating with respect to $t$, we get obtain the desired estimate.
    
\end{proof}

Now, collecting the inequalities \eqref{Energy_Indentity} and \eqref{Energy_LOW_Indentity}, we show that although we are in the critical regime, the convolution term helps to gain a dissipative term that is crucial in controlling the nonlinearity and closing the energy estimate. This result is contained in the following lemma.

\begin{lemma}\label{lemma:L2_control_Deltaz}
We have for all $t\geq 0$
\begin{equation}\label{full_energy_ineq}
    E_1(t) + E_2(t) + \frac{\delta }{\eta} \intt \|\nabla \D (\tau \psit + \psi) \|_{L^2(\Omega)}^2 \ds \leq E_1(0) + E_2(0) + \mathrm{R}_1 + \mathrm{R}_2.
\end{equation}
\end{lemma}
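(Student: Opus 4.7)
The plan is to add the two identities from Lemmas~\ref{lemma_E1_identitty} and~\ref{lemma_E1L_identitty}, and then convert the combined sum of the two convolution terms on the left-hand sides into the dissipative $L^2$-quantity appearing in~\eqref{full_energy_ineq} by invoking the strong positivity of $\kernel$ through Lemma~\ref{lemma_L2_control}.

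The key structural observation is that the two convolution integrals fit together as $\kernel \Lconv y$ and $\kernel \Lconv y_t$ for a single vector field $y := \nabla \D(\tau \psit + \psi)$. Indeed, $y_t = \nabla \D(\tau \psitt + \psit)$, which is precisely the argument of the convolution in~\eqref{Energy_Indentity}, while $y$ itself is the argument in~\eqref{Energy_LOW_Indentity}. I would therefore apply Lemma~\ref{lemma_L2_control} componentwise in $x \in \Omega$ to each scalar coordinate of $y$, and then integrate the resulting pointwise inequality over $\Omega$ and sum over components. This yields
\begin{equation*}
\intt \|\nabla \D(\tau \psit + \psi)\|_{L^2(\Omega)}^2 \ds \leq \|y(0)\|_{L^2(\Omega)}^2 + 2\eta^{-1}\intt \int_\Omega \bigl[(\kernel\Lconv y)\cdot y + (\kernel\Lconv y_t)\cdot y_t\bigr] \dx\ds.
\end{equation*}

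At this point the crucial input is the initial-data alignment $\psi_1 = -\psi_0/\tau$ built into~\eqref{Initial_Condition_2}, which ensures $(\tau \psit + \psi)|_{t=0} = 0$ and hence $y(0) \equiv 0$ a.e.\ in $\Omega$. With this boundary term gone, the inequality rearranges into a lower bound (up to the positive factor $\eta/2$, absorbed in the definition of $\mathbf{D}$) on the sum of the two convolution quantities by $\intt \|\nabla \D(\tau \psit + \psi)\|_{L^2}^2 \ds$. Multiplying by $\delta$ produces exactly the dissipative term $\frac{\delta}{\eta}\intt \|\nabla \D(\tau \psit + \psi)\|_{L^2}^2 \ds$ in~\eqref{full_energy_ineq}. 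Finally, summing~\eqref{Energy_Indentity} and~\eqref{Energy_LOW_Indentity} and substituting this lower bound into the convolution sum delivers the claimed inequality.

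The only mildly delicate point is the vanishing of $y(0)$, which is exactly why the restriction $\psi_1 = -\psi_0/\tau$ is imposed in the assumptions of Theorem~\ref{thm:main_thm}; without it, an uncontrolled residual term $\|\nabla \D(\tau \psi_1 + \psi_0)\|_{L^2}^2$ would appear on the right-hand side, and more importantly the integration-in-time device of Section~\ref{sec:nabla_v_dmp} would not close cleanly against the non-integrable kernel~\eqref{eq:frac_kernel}. Everything else reduces to algebraic manipulation of the two preceding energy identities.
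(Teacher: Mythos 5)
Your proposal is correct and follows essentially the same route as the paper: apply Lemma~\ref{lemma_L2_control} with $y = \nabla\D(\tau\psit+\psi)$, so that $y_t = \nabla\D(\tau\psitt+\psit)$, and then sum the identities of Lemmas~\ref{lemma_E1_identitty} and~\ref{lemma_E1L_identitty}. In fact you make explicit two points the paper leaves implicit --- the componentwise application of the scalar lemma followed by integration over $\Omega$, and the vanishing of $y(0)$ forced by $\psi_1=-\psi_0/\tau$ --- and your observation that the resulting constant ($\eta/2$ versus the paper's $1/\eta$) is merely absorbed into the definition of $\mathbf{D}$ is accurate.
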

\begin{proof}
Taking advantage of Lemma~\ref{lemma_L2_control}, we obtain 
\begin{equation}\label{L2_control_Dv}
    \begin{aligned}
	\int_0^{t} (\genk\Lconv \nabla \D (\tau \psitt + \psit), \nabla \D (\tau \psitt + \psit)) +  (\genk\Lconv \nabla \D (\tau \psit + \psi),  \nabla \D (\tau \psit + \psi)) \ds \\\geq \frac1\eta \intt\|\nabla \D(\tau \psit + \psi)\|^2_{L^2(\Omega)}\ds,
    \end{aligned}
\end{equation}
for some $\eta>0$.
  Hence,  summing up  \eqref{Energy_Indentity} and \eqref{Energy_LOW_Indentity} yields the desired control of the term $\|\tau \nabla \D \psit + \nabla\D \psi\|_{L^2_t L^2(\Omega)}$.  
\end{proof}

We show through the following lemma that the $L^2$-$L^2$ control provided by Lemma~\ref{lemma:L2_control_Deltaz} for the quantity 
$$ \intt \| \nabla \D(\tau  \psit + \psi) \|_{L^2(\Omega)}^2 \ds $$
can be used to extract suitable control for the individual component $\nabla \D\psi$ and $\nabla \D \psit$ in the spirit of \cite[Lemma 3.5]{lasiecka2017global}.
\begin{lemma} \label{lemma:Dissipation_control} Let $t\geq 0$, then we have the following bounds 
\begin{equation}
       \|\nabla \D \psi\|^2_{L^p(0,t;L^2(\Omega))}\lesssim  \intt \| \nabla \D (\tau \psit + \psi)\|_{L^2(\Omega)}^2\ds + \|\nabla \D \psi_0\|^2_{L^2(\Omega)} ,
\end{equation}
for all $p \in [2,\infty]$ and 
\begin{equation}
       \|\nabla \D \psit\|^2_{L^2(0,t;L^2(\Omega))}\lesssim  \intt \| \nabla\D (\tau \psit +\psi)\|_{L^2(\Omega)}^2\ds + \intt \|\nabla\D \psi\|^2_{L^2(\Omega)} \ds.
\end{equation}
The hidden constants are independent of time $t$.
\end{lemma}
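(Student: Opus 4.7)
The plan is to view the first order relation $\tau\psi_t + \psi = w$ (where I set $w := \tau\psi_t + \psi$ for shorthand) as a pointwise-in-$x$ linear ODE in $t$ with forcing $w/\tau$. Solving via the integrating factor $e^{t/\tau}$ yields the Duhamel-type representation
\begin{equation*}
\psi(t) = e^{-t/\tau}\psi_0 + \frac{1}{\tau}\int_0^t e^{-(t-s)/\tau}\,w(s)\ds.
\end{equation*}
Applying $\nabla\Delta$ (which commutes with the time operations) and taking the $L^2(\Omega)$ norm inside gives
\begin{equation*}
\|\nabla\Delta\psi(t)\|_{L^2(\Omega)} \leq e^{-t/\tau}\|\nabla\Delta\psi_0\|_{L^2(\Omega)} + \frac{1}{\tau}\bigl(e^{-\cdot/\tau}\ast \|\nabla\Delta w(\cdot)\|_{L^2(\Omega)}\bigr)(t).
\end{equation*}

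Next I invoke Young's convolution inequality: for $p\in[2,\infty]$ choose $a=2p/(p+2)\in[1,2]$ so that $1+\tfrac{1}{p}=\tfrac{1}{a}+\tfrac{1}{2}$. Since $e^{-\cdot/\tau}\in L^a(0,\infty)$ for every $a\geq 1$ with norm bounded by a constant depending only on $\tau$, I obtain
\begin{equation*}
\|\nabla\Delta\psi\|_{L^p(0,t;L^2(\Omega))} \lesssim \|\nabla\Delta\psi_0\|_{L^2(\Omega)} + \|\nabla\Delta w\|_{L^2(0,t;L^2(\Omega))},
\end{equation*}
and squaring delivers the first claimed bound with constants independent of $t$ (the exponential decay of the kernel is what kills any $t$-dependence).

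For the second estimate the approach is purely algebraic: from $w=\tau\psi_t+\psi$ I read off $\psi_t = \tfrac{1}{\tau}(w-\psi)$, so $\nabla\Delta\psi_t = \tfrac{1}{\tau}\nabla\Delta w - \tfrac{1}{\tau}\nabla\Delta\psi$. Taking the $L^2(0,t;L^2(\Omega))$ norm, applying the triangle inequality, and squaring produces
\begin{equation*}
\|\nabla\Delta\psi_t\|^2_{L^2(0,t;L^2(\Omega))} \lesssim \int_0^t\|\nabla\Delta w\|^2_{L^2(\Omega)}\ds + \int_0^t\|\nabla\Delta\psi\|^2_{L^2(\Omega)}\ds,
\end{equation*}
which is exactly the second bound.

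There is no real obstacle here since both estimates reduce to an ODE inversion; the only point requiring minor care is verifying that $\psi_0\in\Honethree$ makes $\nabla\Delta\psi_0\in L^2(\Omega)$ (which follows from the regularity of the initial data stated in \eqref{Initial_Condition_2}) and that the Duhamel representation is justified for functions in the solution class $\mathcal{U}(0,T)$ defined in \eqref{def:space_of_solutions}, which it is because $\psi\in W^{1,\infty}(0,T;\Honethree)$ so the identity $\tau\psi_t+\psi=w$ holds in $\Honethree$ for a.e.\ $t$.
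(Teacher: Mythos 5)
Your proof is correct, but for the first estimate you take a genuinely different route from the paper. You invert the relation $\tau\psit+\psi=w$ via the integrating factor $e^{t/\tau}$ to get a Duhamel representation and then apply Young's convolution inequality with the exponent $a=2p/(p+2)$, using $e^{-\cdot/\tau}\in L^a(0,\infty)$ to obtain the $L^p(0,t;L^2(\Omega))$ bound for each $p\in[2,\infty]$ directly, with constants independent of $t$. The paper instead runs an energy argument: it tests $\tau\nabla\D\psit+\nabla\D\psi=\nabla\D w$ with $\nabla\D\psi$, integrates in time to obtain simultaneously the endpoint bounds in $L^\infty(0,t;L^2(\Omega))$ and $L^2(0,t;L^2(\Omega))$ (the latter coming from the dissipative term $\|\nabla\D\psi\|^2_{L^2(\Omega)}$ produced by the testing), and then interpolates between these two endpoints to cover all $p\in[2,\infty]$. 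The two arguments are essentially equivalent in strength here; yours is arguably more self-contained for a single fixed $p$, while the paper's gives the two endpoint estimates in one identity and only then invokes interpolation. It is worth noting that your variation-of-constants representation is exactly the device the paper itself uses in the Appendix (see the convolution representation around \eqref{eq:ODE_rep_conv}), so the technique is consistent with the authors' toolbox. For the second estimate both you and the paper use the same purely algebraic triangle-inequality argument. Your closing remarks on the regularity needed to justify the manipulations ($\psi_0\in\Honethree$ and $\psi\in W^{1,\infty}(0,T;\Honethree)$) are appropriate and address the only point of rigor in the lemma.
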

\begin{proof}
We start with the relation 
\begin{equation}\label{eq:start_bootstrap_L2}
\tau \nabla\D \psit + \nabla \D \psi = \nabla \D w,
\end{equation}
then testing with $\nabla \D \psi$ and integration over $\Omega$ yields that
\begin{equation}
\frac\tau2 \ddt \|\nabla\D \psi\|^2_{L^2(\Omega)} + \|\nabla\D \psi\|^2_{L^2(\Omega)} \leq  \|\nabla\D w\|_{L^2(\Omega)} \|\nabla\D\psi\|_{L^2(\Omega)}.
\end{equation}
Integrating over $(0,\nu)$, and taking the supremum of $\nu \in (0,t)$, we obtain 
\begin{equation}
      \sup_{\nu \in (0,t)} \|\nabla\D \psi(\nu)\|^2_{L^2(\Omega)} + \intt \|\nabla\D \psi\|^2_{L^2(\Omega)} \ds \lesssim  \intt \|\nabla\D w\|_{L^2(\Omega)}^2\ds + \|\nabla\D \psi_0\|^2_{L^2(\Omega)} ,
\end{equation}
where the hidden constant only depends on the parameters of the problem but not on time $t$. 
By interpolation of $L^p$ spaces, we infer that 
\begin{equation}
       \|\nabla\D \psi\|^2_{L^p(0,t;L^2(\Omega))}\lesssim  \intt \|\nabla\D w\|_{L^2(\Omega)}^2\ds + \|\nabla\D \psi_0\|^2_{L^2(\Omega)} ,
\end{equation}
for $p \in [2,\infty]$.

For the second bound, we rely on the triangle inequality 
\begin{equation}
    \begin{aligned}
       \|\tau \nabla\D \psit\|_{L^2(0,t;L^2(\Omega))} \leq &\|\nabla\D (\tau\psi+\psi_t)\|_{L^2(0,t;L^2(\Omega))} + \|\nabla\D \psi\|_{L^2(0,t;L^2(\Omega))} \\
       \leq & \|\nabla\D (\tau\psi+\psi_t)\|_{L^2(0,t;L^2(\Omega))} + \|\nabla\D \psi_0\|^2_{L^2(\Omega)}.
    \end{aligned}
\end{equation}
\end{proof}

\begin{proposition}\label{lem::Energy_Diss_1D} Let $\Omega$ be a bounded Lipschitz domain. Then, for all time $t>0$, it holds that
\begin{equation}\label{ineq::Energy_Diss_1D}
				\mathbf{E}(t) + \mathbf{D}(t) \leq C_* \left[ \mathbf{E}(0) +\|\nabla \D \psi_0\|^4+ \|\tilde f\|_{W^{1,1}(\R^+; H^2(\Omega))}^2  + \mathbf{D}^2(t)\right],
			\end{equation}
          where $\mathbf{E}(t)$ and $\mathbf{D}(t)$ are defined in \eqref{Total_Energy} and \eqref{D_Term} respectively. Here,   the constant $C_*\geq1$ is independent of time $t$.
	\end{proposition}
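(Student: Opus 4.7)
The plan is to take Lemma~\ref{lemma:L2_control_Deltaz} as the starting point,
\begin{equation*}
E_1(t)+E_2(t)+\mathbf{D}(t)\le E_1(0)+E_2(0)+\mathrm{R}_1+\mathrm{R}_2,
\end{equation*}
and to pass to $\mathbf{E}(t)+\mathbf{D}(t)$ on the left by taking the supremum over the time variable, noting that $\mathrm{R}_1, \mathrm{R}_2$ are monotone nondecreasing in their upper integration limit, so that their suprema over $(0,t)$ are controlled by their values at $t$. The work then reduces to controlling the four contributions in $\mathrm{R}_1$ and $\mathrm{R}_2$ by a bound of the form $\epsilon(\mathbf{E}(t)+\mathbf{D}(t)) + C_\epsilon(\mathbf{D}(t)^2 + \|\nabla\D\psi_0\|_{L^2}^4 + \|\tilde f\|_{W^{1,1}(H^2)}^2)$, and a standard absorption argument then yields~\eqref{ineq::Energy_Diss_1D}.

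\textbf{Source contributions.} Both source terms share the generic form $\int_0^t\!\int_\Omega \D g\,\D h\dx\ds$ with $g\in\{f,\tilde f\}$ and $h\in\{\psit+\tau\psitt,\,\psi+\tau\psit\}$. H\"older's inequality in the $L^1_tL^2_x$--$L^\infty_tL^2_x$ duality, combined with $\|\D h\|_{L^\infty L^2}^2\lesssim\mathbf{E}(t)$ (which is immediate from the definitions of $E_1$ and $E_2$) and the identification $\|f\|_{L^1H^2}+\|\tilde f\|_{L^1H^2}\lesssim\|\tilde f\|_{W^{1,1}(H^2)}$, produces a bound of the form $\|\tilde f\|_{W^{1,1}(H^2)}\sqrt{\mathbf{E}(t)}$, which Young's inequality turns into $\epsilon\mathbf{E}(t)+C_\epsilon\|\tilde f\|_{W^{1,1}(H^2)}^2$.

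\textbf{Nonlinear contributions.} For $\mathrm{R}_2^{\mathrm{nonlin}}$, I would write $\nabla|\nabla\psi|^2 = 2D^2\psi\,\nabla\psi$ and use $\|\nabla\psi\|_{L^\infty}\lesssim\|\psi\|_{H^3}$ (via $H^2\hookrightarrow L^\infty$ for $d\le 3$) together with $\|D^2\psi\|_{L^2}\lesssim\|\D\psi\|_{L^2}\lesssim\|\nabla\D\psi\|_{L^2}$ (from elliptic regularity and Poincar\'e) to get $\|\nabla|\nabla\psi|^2\|_{L^2}\lesssim\|\nabla\D\psi\|_{L^2}^2$. Cauchy--Schwarz in time together with Lemma~\ref{lemma:Dissipation_control} then produces
\begin{equation*}
|\mathrm{R}_2^{\mathrm{nonlin}}|\lesssim\|\nabla\D\psi\|_{L^\infty L^2}\|\nabla\D\psi\|_{L^2L^2}\|\nabla\D(\psi+\tau\psit)\|_{L^2L^2}\lesssim(\mathbf{D}(t)+\|\nabla\D\psi_0\|_{L^2}^2)\sqrt{\mathbf{D}(t)}.
\end{equation*}
For $\mathrm{R}_1^{\mathrm{nonlin}}$, I would expand $\D(\nabla\psi\cdot\nabla\psit) = \nabla\D\psi\cdot\nabla\psit + \nabla\psi\cdot\nabla\D\psit + 2D^2\psi:D^2\psit$. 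The first two summands are bounded pointwise in $t$ by $\|\nabla\D\psi\|_{L^2}\|\nabla\D\psit\|_{L^2}$ via $H^2\hookrightarrow L^\infty$ and elliptic regularity. The third summand is handled by the split $\|D^2\psi\|_{L^4}\|D^2\psit\|_{L^4}$ using $H^1\hookrightarrow L^4$ in $d\le 3$, which reduces likewise to $\|\nabla\D\psi\|_{L^2}\|\nabla\D\psit\|_{L^2}$. Cauchy--Schwarz in time, Lemma~\ref{lemma:Dissipation_control}, and $\|\D(\psit+\tau\psitt)\|_{L^\infty L^2}\lesssim\sqrt{\mathbf{E}(t)}$ then yield
\begin{equation*}
|\mathrm{R}_1^{\mathrm{nonlin}}|\lesssim\sqrt{\mathbf{E}(t)}\bigl(\mathbf{D}(t)+\|\nabla\D\psi_0\|_{L^2}^2\bigr).
\end{equation*}
A final application of Young's inequality on both bounds produces the desired absorbable structure $\epsilon(\mathbf{E}(t)+\mathbf{D}(t))+C_\epsilon(\mathbf{D}(t)^2+\|\nabla\D\psi_0\|_{L^2}^4)$.

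\textbf{Main obstacle.} The most delicate step is the bilinear cross term $D^2\psi:D^2\psit$ in $\mathrm{R}_1^{\mathrm{nonlin}}$: if one had only $\psit\in H^2$, no pair of conjugate Sobolev exponents in $d=3$ would render this product admissible in $L^2_x$, since $D^2\psit$ would belong only to $L^2$ while $D^2\psi\in L^6\setminus L^\infty$ in general. The higher regularity $\psit\in L^\infty(\Honethree)$ built into the solution class~\eqref{def:space_of_solutions} is precisely what enables the symmetric $L^4\cdot L^4$ split via $H^1\hookrightarrow L^4$ and thereby closes the estimate.
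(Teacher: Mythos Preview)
Your proposal is correct and mirrors the paper's argument: both start from Lemma~\ref{lemma:L2_control_Deltaz}, bound $\mathrm{R}_1$ via the bilinear estimate $\|\D(\nabla\psi\cdot\nabla\psi_t)\|_{L^2}\lesssim\|\nabla\D\psi\|_{L^2}\|\nabla\D\psi_t\|_{L^2}$ together with Lemma~\ref{lemma:Dissipation_control}, handle $\mathrm{R}_2$ and the source terms by analogous H\"older--Young splits, and absorb the $\varepsilon$-terms. One small caveat: $\mathrm{R}_1,\mathrm{R}_2$ are not literally monotone in $t$ since their integrands can change sign, but this is harmless because your actual bounds for $|\mathrm{R}_i(\nu)|$ are already expressed through the monotone quantities $\mathbf{E}(t),\mathbf{D}(t),\|\nabla\D\psi_0\|_{L^2}$ and $\|\tilde f\|_{W^{1,1}(H^2)}$, so the supremum step goes through regardless.
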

   \begin{proof}
   It suffices to estimate the terms $\mathrm{R}_1$ and $\mathrm{R}_2$ in \eqref{full_energy_ineq}, which we do as follows: for an arbitrary $\varepsilon>0$, there exist $C(\varepsilon)$ such that
   \begin{equation}\label{R_1_Estimate}
   \begin{aligned}
   \mathrm{R}_1 = & \int_0^t\int_{\Omega}2\sigma \D (\nabla \psi \cdot \nabla \psit) \cdot 
		\D \left(  \psit+\tau  \psitt\right) \dx\ds   + \int_0^t\int_{\Omega} \D f\, \cdot \D \left( \psit+\tau \psitt\right) \dx\ds\\
    \leq &\, C(\varepsilon) \|\D f\|_{L^1(0,t; L^2(\Omega))}^2 + 2 \varepsilon \|\D (\psit+\tau \psitt)\|_{L^\infty(0,t;L^2(\Omega))}^2\\&+C(\epsilon)  \intt \| \D (\nabla \psi \cdot \nabla \psit)\|_{L^2(\Omega)}^2 \ds.
   \end{aligned}
   \end{equation}

  \noindent We next provide a bound on 
   \begin{equation}\label{L2estimate:nonlinearity}
   \begin{aligned}
   \| \D (\nabla \psi &\cdot \nabla \psit)\|_{L^2(\Omega)} \\ & \lesssim \|\nabla \psi\|_{H^2(\Omega)} \|\nabla \psit\|_{L^\infty(\Omega)} +  \|\nabla \psi\|_{W^{1,4}(\Omega)} \|\nabla \psit\|_{W^{1,4}(\Omega)} +   \|\nabla \psit\|_{H^2(\Omega)} \|\nabla \psi\|  _{L^\infty(\Omega)}   \\
   & \lesssim \|\nabla \D \psit\|_{L^2(\Omega)} \|\nabla \D \psi\|_{L^2(\Omega)},
   \end{aligned}
    \end{equation}
    where in the last line we used  the Sobelev embeddings $H^2 (\Omega) \hookrightarrow L^\infty(\Omega)\cap W^{1,4} (\Omega)$ as well as the elliptic estimate~\eqref{high_elliptic_reg_est} 
    \begin{equation}  \label{eq:elliptic_reg}
        \|\psi\|_{H^3(\Omega)} \lesssim \|\nabla \D \psi\|_{L^2},  
    \end{equation}
    since $\D\psi|_{\partial\Omega} =0$ due to the definition of the solution space in \eqref{def:space_of_solutions}.

    Thus, going back to \eqref{R_1_Estimate}, we obtain
    \begin{equation}
   \begin{aligned}
   \mathrm{R}_1 
    \leq &\, C(\varepsilon) \|\D f\|_{L^1(0,t; L^2(\Omega))}^2 + 2 \varepsilon \|\D (\psit+\tau \psitt)\|_{L^\infty(0,t;L^2(\Omega))}^2\\&+ C(\varepsilon)\mathbf{D}^2(t) + C(\varepsilon)\|\nabla \D \psi_0\|^4,
   \end{aligned}
   \end{equation}
   where we have used Lemma~\ref{lemma:L2_control_Deltaz}.
  
We now turn our attention to the term 
$$\mathrm{R}_2 = -\int_0^t\int_{\Omega} \sigma \nabla |\nabla \psi|^2 \cdot
		\nabla \D \left( \psi+\tau \psit\right) \dx\ds + \int_0^t\int_{\Omega}  \D \tilde f \cdot \D \left( \psi+\tau \psit\right) \dx\ds,$$ 
        whose terms can be estimated as follows:
        \begin{equation}
            \begin{aligned}
                \Big|\int_0^t\int_{\Omega} \sigma & \nabla |\nabla \psi|^2 \cdot
		\nabla \D \left( \psi+\tau \psit\right) \dx\ds\Big| \\\lesssim & \varepsilon \|\nabla \D \left( \psi+\tau \psit\right)\|_{L^\infty(0,t;L^2(\Omega))}^2  + C(\varepsilon) \left\||\nabla \psi|^2\right\|_{L^1(0,t;H^1(\Omega))}^2\\\lesssim & \varepsilon \|\nabla \D \left( \psi+\tau \psit\right)\|_{L^\infty(0,t;L^2(\Omega))}^2  + C(\varepsilon) \left\||\nabla \psi|^2\right\|_{L^1(0,t;H^2(\Omega))}^2
         \\ \lesssim & \varepsilon \|\nabla \D \left( \psi+\tau \psit\right)\|_{L^\infty(0,t;L^2(\Omega))}^2  + C(\varepsilon) \left\| \nabla \psi\right\|_{L^2(0,t;H^2(\Omega))}^4
        \\ \lesssim & \varepsilon \|\nabla \D \left( \psi+\tau \psit\right)\|_{L^\infty(0,t;L^2(\Omega))}^2  + C(\varepsilon) \left\| \psi\right\|_{L^2(0,t;H^3(\Omega))}^4,
            \end{aligned}
        \end{equation} 
    where in the last line we used that $H^2(\Omega)$ is an algebra for $\Omega \in \R^d$ with $d\leq3$. Using Lemma~\ref{lemma:Dissipation_control}, we furthermore estimate:
    $$\left\| \psi\right\|_{L^2(0,t;H^3(\Omega))}^4 \lesssim \mathbf{D}^2(t) + \|\nabla \D \psi_0\|^4$$
    The estimate for the source term $\tilde f$ is treated similarly to above, this yields the inequality:  \begin{equation}
   \begin{aligned}
   \mathrm{R}_2
    \leq & C(\varepsilon)\|\D \tilde f\|_{L^1(0,t; L^2(\Omega))}^2 + 2 \varepsilon \mathbf{E}(t)
    + C(\varepsilon) \mathbf{D}^2(t) + C(\varepsilon)\|\nabla \D \psi_0\|^4.
   \end{aligned}
   \end{equation}
   Thus, by choosing $\varepsilon$ small enough, we obtain \eqref{ineq::Energy_Diss_1D}. 
\end{proof}
\begin{proof}[Proof of Theorem \ref{thm:main_thm}]
Let $t>0$ and define $\mathbf{Y}(t):= \mathbf{E}(t) + \mathbf{D}(t) $. Hence, using \eqref{ineq::Energy_Diss_1D}, we obtain   
\begin{equation}
\mathbf{Y}(t) \lesssim  \mathbf{Y}(0) + \|\tilde f\|^2_{W^{1,1}(\R^+; H^2(\Omega))} + \|\nabla \D \psi_0\|^4 + \mathbf{Y}(t)^2 .
\end{equation}
Applying Lemma ~\ref{Lemma_Stauss}, we deduce that $\mathbf{Y}(t)$ is uniformly bounded provided that $\|\psi_2\|_{L^2}+\|\tilde f\|_{W^{1,1}} $ is sufficiently small. 
 This complete the  proof of Theorem~\ref{thm:main_thm}.  We refer the reader to, e.g., \cite[p. 8]{racke2021global} and \cite[p. 15]{ji2022optimal} for similar arguments, sometimes based on a bootstrap argument by Tao~\cite[Proposition 1.21]{tao2006nonlinear} comparable to Lemma~\ref{Lemma_Stauss}.
\end{proof}

\section{conclusion}
In this work, and even though we are in the critical regime in terms of the coefficients, we have shown that fractional damping is sufficient to ensure global existence for the nonlinear JMGT equation with quadratic gradient nonlinearity when the problem data are small. We also ensure that the solution's energy remains small as time goes to infinity. A natural future question will be to study the precise asymptotic behavior as time goes to infinity. In particular, we would like to know under which conditions on the problem data and on the memory kernel, the solution's energy decays, and to determine the corresponding decay rate.  

 Future work will also be tasked with proving the global existence of the fractional JMGT equation with Westervelt nonlinearity (i.e., $\psitt \psit$). In the present framework, we do not obtain sufficient control ($L^2$-control specifically) on the quantity $\psitt$ to argue global existence for Westervelt-type with fractional damping. Notice that this differs from the  strong damping case ($\genk = \delta_0$, $\delta>0$), where the full energy norm of the linearized equation can be shown to decay; see, e.g., \cite{kaltenbacher2011wellposedness}. Thus, one likely needs additional assumptions to be made on the memory kernel to achieve this result.

\section*{Acknowledgments}
	The work of the first author was supported by the Czech Sciences Foundation (GA\v CR), Grant Agreement 24-11034S. The Institute of Mathematics of the Academy of Sciences of the Czech Republic is supported by RVO:67985840. 
 
%%%%%%%%%%%%%%%%%
\appendix

\section{Local well-posedness result}\label{App:Appendix}
We present in this appendix, the results concerning the local well-posed of \eqref{System_New}--\eqref{Boundary_Condition_3}. The proof is largely inspired from that of \cite[Theorem 3.1]{meliani2024well} with changes made to accomodate the quadratic gradient nonlinearity. 
\begin{theorem}\label{prop::local_well_posedness_1d}  Let $\Omega$ be a $C^{2,1}-$regular domain. Let $\tau>0$, $\delta \geq 0$, and Assumption~\ref{assu:kernel} on $\genk$ hold. Assume that  $(\psi_0, \psi_1,\psi_2) \in \Honethree \times \Honethree \times \Honetwo$ and $f\in L^1(\R^+;\Honetwo)$
	such that 
	\begin{equation}\label{N_0}
\|\psi_0\|_{H^3(\Omega)}^2 + \|\psi_1\|_{H^3(\Omega)}^2 + \|\psi_2\|_{H^2(\Omega)}^2 + \|f\|_{L^1(\R^+;H^2(\Omega))}^2  \leq N_0,
\end{equation}
	for some $N_0>0$. Then, there exits $T_*>0$ that only depends on $N_0$ such that  the system \eqref{System_New}--\eqref{Boundary_Condition_3} admits a unique solution $\psi \in \mathcal{U}(0,t)$, at least up to $T_*$. Furthermore, for all $t \in [0,T_*]$, the following estimate holds:
 \begin{equation}
\|\psi\|_{\mathcal{U}(0,t)} \leq C(T_*) N_0,
\end{equation}
where the constant $C(T_*)$ depends on $T_*$. 
\end{theorem}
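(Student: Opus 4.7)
The plan is to construct the local solution via a fixed-point argument built on top of a linearized version of \eqref{System_New}. Given $R>0$ (to be chosen) and $T_*>0$, let
\begin{equation}
    \mathcal{B}_R(T_*) := \bigl\{\phi \in \mathcal{U}(0,T_*) \, : \, \|\phi\|_{\mathcal{U}(0,T_*)} \leq R,\ \phi(0)=\psi_0,\ \phi_t(0)=\psi_1,\ \phi_{tt}(0)=\psi_2\bigr\}.
\end{equation}
For $\phi \in \mathcal{B}_R(T_*)$, I consider the linear problem
\begin{equation}\label{eq:linprob_sketch}
    \tau \psittt + \psitt - c^2 \D\psi - \tau c^2 \D\psit - \delta\,\genk \Lconv \D(\tau \psitt + \psit) = 2\sigma \nabla\phi \cdot \nabla \phi_t + f,
\end{equation}
subject to the data and boundary condition from \eqref{Initial_Condition_2}--\eqref{Boundary_Condition_3}. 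The first step is to show that \eqref{eq:linprob_sketch} admits a unique solution $\psi \in \mathcal{U}(0,T_*)$. I will set this up by a Galerkin discretization in the smooth eigenbasis of the Dirichlet Laplacian, so that the boundary traces required by the definition of $\Honethree$ are automatically satisfied at each finite-dimensional level.

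For the energy estimates, I will test the Galerkin version of \eqref{eq:linprob_sketch} by two multipliers chosen in analogy with the proof of Lemma~\ref{lemma_E1_identitty}: first $\D^2(\psit + \tau \psitt)$ to bound $E_1(t)$, and then (after time-integrating as in Section~\ref{sec:nabla_v_dmp}) an analogue producing a bound on $E_2(t)$. The positivity of $\genk$ via \eqref{ineq:positive_kernel} will make the convolution contribution non-negative and can be dropped on the left-hand side; on the right-hand side, the forcing $2\sigma\nabla\phi\cdot\nabla\phi_t + f$ is controlled exactly as in \eqref{L2estimate:nonlinearity}, yielding a bound of the form
\begin{equation}
    \|\psi\|_{\mathcal{U}(0,t)}^2 \leq C\bigl(N_0 + t \cdot P(R)\bigr),
\end{equation}
uniformly in the Galerkin index, where $P$ is a polynomial in $R$. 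Passing to the limit in the standard way (weak/weak-$*$ convergence plus Aubin--Lions for the nonlinear consistency, which is not needed here because the right-hand side does not depend on $\psi$) gives existence; uniqueness at the linear level follows from the same estimate applied to the difference of two solutions.

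Choosing $R = 2 C N_0^{1/2}$ and then $T_*$ small enough in terms of $R$ and $N_0$, the solution map $\mathcal{T}: \phi \mapsto \psi$ sends $\mathcal{B}_R(T_*)$ into itself. To close the argument I will establish a contraction estimate in a weaker norm for the difference $\bar\psi := \mathcal{T}(\phi^1) - \mathcal{T}(\phi^2)$, which satisfies \eqref{eq:linprob_sketch} with zero data and right-hand side $2\sigma\bigl(\nabla\phi^1\cdot\nabla\phi^1_t - \nabla\phi^2\cdot\nabla\phi^2_t\bigr)$. Testing at a lower regularity level (e.g.\ with $\D(\bar\psi_t + \tau\bar\psi_{tt})$ rather than $\D^2$), exploiting $\|\phi^i\|_{\mathcal{U}} \leq R$, and choosing $T_*$ possibly smaller yields $\|\bar\psi\|_* \leq \tfrac12 \|\phi^1 - \phi^2\|_*$ in that weaker norm; Banach's fixed-point theorem in the closed subset $\mathcal{B}_R(T_*)$ (closed in the weaker topology, as $\mathcal{B}_R$ is bounded in $\mathcal{U}$ and hence weakly-$*$ closed) then produces the desired unique local solution.

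\textbf{Main obstacle.} The delicate point is ensuring that the test function $\D^2(\psit + \tau\psitt)$ has enough regularity and the correct boundary traces to make the high-order energy identity rigorous: this forces both the Galerkin basis to live in $\Honethree$ and the verification that $2\sigma\nabla\phi\cdot\nabla\phi_t$ belongs to $H_0^1(\Omega)$, which is exactly the constraint built into $\mathcal{U}$ via \eqref{def:space_of_solutions}. The second subtlety is handling the nonlocal term $\delta\,\genk\Lconv\D(\tau\psitt+\psit)$: because $\genk \notin L^1(\R^+)$, contraction estimates cannot rely on Young's convolution inequality; instead I will use only positivity (to discard the term on the left-hand side) and Cauchy--Schwarz on finite intervals to absorb it when it appears on the right, exactly as in \cite{meliani2024well}.
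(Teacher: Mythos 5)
Your fixed-point strategy is sound in outline, but it is a genuinely different route from the paper's. The paper does \emph{not} linearize and iterate: it runs the Galerkin scheme directly on the nonlinear equation (existence of each semi-discrete solution coming from Volterra integral-equation theory), derives the single a priori bound \eqref{estimate:nonlinear}, which is superlinear in the energy functional $F$, and then obtains a uniform-in-$n$ bound on a time $T_*(N_0)$ by comparison with the ODE \eqref{eqn:bound_equation} (a nonlinear Gronwall argument), before passing to the limit by weak-$*$ compactness and Aubin--Lions; uniqueness is handled separately. Your approach trades the nonlinear Gronwall step for a contraction in a weaker norm, which buys uniqueness for free and avoids the delicate identification of the limit of $\nabla\psi^n\cdot\nabla\psi^n_t$, at the cost of having to build a full linear well-posedness theory and a two-norm (strong ball / weak contraction) framework. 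One further structural difference: to pass from control of the combination $\D(\psit+\tau\psitt)$, $\nabla\D(\psi+\tau\psit)$ to control of the individual components, the paper does not use $E_2$ or the time-integrated equation at the local level (those are reserved for the global argument and require the compatibility $\psi_1=-\tfrac1\tau\psi_0$ and the construction of $\xi_0$); it instead solves the first-order ODE $\tau\nabla\D\psit+\nabla\D\psi=\nabla\D z$ explicitly via the exponential convolution representation \eqref{eq:ODE_rep_conv}. Since Theorem~\ref{prop::local_well_posedness_1d} allows general $\psi_1\in\Honethree$, you should use that representation rather than the $E_2$ estimate in your step bounding the $\mathcal{U}$-norm.

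Two wrinkles you should address to make the sketch airtight. First, the self-map: the space $\mathcal{U}$ in \eqref{def:space_of_solutions} carries the constraint $\sigma\nabla\psi\cdot\nabla\psit\in H_0^1(\Omega)$, which for the output $\psi=\mathcal{T}(\phi)$ of the \emph{linear} problem does not follow from the PDE (the equation only forces the prescribed right-hand side $2\sigma\nabla\phi\cdot\nabla\phi_t+f$ to balance the other traces, not the trace of $\nabla\psi\cdot\nabla\psit$). You should either drop that constraint from the definition of the ball $\mathcal{B}_R(T_*)$ and verify it a posteriori at the fixed point (where $\psi=\phi$ and the argument of \eqref{eq:boundary_nonlin} applies), or build it into the iteration some other way. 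Second, your bound $\|\psi\|_{\mathcal{U}(0,t)}^2\le C(N_0+t\,P(R))$ needs the right-hand side measured in $L^1(0,t;L^2)$ after applying \eqref{L2estimate:nonlinearity} and \eqref{eq:ODE_rep_conv}; this works, but note that the factor of $t$ is what makes $T_*$ depend only on $N_0$ (through $R$), so the order of choosing $R$ first and then $T_*$ must be respected, exactly as you indicate. With these points fixed, your argument goes through.
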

\begin{proof}
	The proof of Theorem \ref{prop::local_well_posedness_1d} uses a Galerkin approximation combined with energy estimates and compactness arguments. 
    We may approximate the solution by
	\begin{equation}\label{aansatz:psi}
		\begin{aligned}
			\psi^{n}(x,t) =\sum_{i=1}^n \xi^n_i(t)v_i(x),
		\end{aligned}
	\end{equation}
	where $\{v_i\}_{i=1}^\infty$ are the complete set of smooth eigenfunctions of the Dirichlet-Laplacian operator; see, e.g., \cite{kaltenbacher2022parabolic} for similar Galerkin approximation arguments. The natural number $n$ is the Galerkin approximation level. The existence of semi-discrete approximate solution is ensured by Volterra integral equations theory along the lines of \cite[Theorem 3.1]{meliani2024well}.

    We focus here on establishing the necessary energy estimate which are of higher order in space compared to the aforementioned \cite{meliani2024well} due to the presence of the quadratic gradient nonlinearity. We also discuss how to  pass  to the limit in the Galerkin procedure.

\subsection*{Step (i): Uniform estimates} 
We establish here \emph{a priori} estimates of the solutions that are uniform with respect to the approximation level $n$.
In what follows, we omit the superscript $n$ to simplify notation. Furthermore, let us recall the energy,
\begin{equation}
	E_{1}(t)=\frac{1}{2}\int_{\Omega}\left( |\D(\psit+\tau \psitt)|^{2}+c^2|\nabla \D  (\psi+\tau \psit)|^{2}\right) \dx
\end{equation}
and introduce the functional $F$ which is simply defined as 
\begin{equation}
    F(t) := \sup_{\mu \in (0,t)} E_1(\mu).
\end{equation}

Let $T>0$ be an arbitrary final time, which will be refined in Step (iii) of this proof to establish a lower bound on the final time of existence. 
Let $t\in[0,T]$. We begin by testing \eqref{System_New}  with $ \D^2 (\tau \psitt +\psit) $ and integrating over $\Omega$ and $(0,t)$, to obtain 
\begin{equation}\label{eq:first_estimate_linearized}
    \begin{multlined}
		E_1(t) + \delta  \int_{\Omega} \genk\Lconv \nabla \D(\tau  \psitt +\psit) \cdot  \nabla \D (\tau \psitt + \psit)\dx  \\
		=  E_1(0) + \intt \int_{\Omega}2\sigma (\nabla \psi \cdot \nabla \psit)  
		\D^2 \left(  \psit+\tau  \psitt\right) +  f \D^2 \left( \psit+\tau \psitt\right) \dxs. 
        \end{multlined}
	\end{equation}

We show, in what follows, how to control each of the arising terms in \eqref{eq:first_estimate_linearized}. First, using Assumption~\eqref{assu:kernel} on the kernel, which ensures its positivity in the sense of inequality~\eqref{ineq:positive_kernel}, we ensure:
\begin{equation}\label{Est_Conv}
\delta  \int_{\Omega} \genk\Lconv \nabla \D(\tau  \psitt +\psit) \cdot  \nabla \D (\tau \psitt + \psit)\dx \geq 0.
\end{equation}

Next, using the fact that $\psi$ are combinations of the eigenfunction of the Dirichlet-Laplacian operator (see \eqref{aansatz:psi}), we infer that $\psi|_{\partial \Omega}=\Delta\psi|_{\partial \Omega} = 0.$ This implies that 
\begin{equation}\label{eq:boundary_nonlin}
2 \sigma \nabla \psi \cdot \nabla \psi_t |_{\partial\Om}=   - \left[\tau \psittt+\psitt
-c^2 \Delta \psi - \tau c^2 \Delta\psit
- \delta \genk \Lconv\Delta (\tau\psitt+ \psit) - f \right]|_{\partial\Om} =0,
\end{equation}
where we used that $f|_{\partial \Om} =0.$

Thus, we can perform integration by parts and use H\"older's inequality to infer that 
\begin{equation}\label{eq:psit_psitt_product}
	\begin{aligned}
	&\hphantom{=\ } \left| \intt  (\nabla \psi \cdot \nabla \psit,\D^2 (\tau\psitt+ \psit))_{L^2(\Omega)} \ds \right|  \\ & = \left| \intt  (\D (\nabla \psi \cdot \nabla \psit),\D (\tau\psitt+ \psit))_{L^2(\Omega)} \ds \right|  \\ &\leq  \intt \| \D (\nabla \psi \cdot \nabla \psit) \|_{L^2(\Om)} \| \D (\tau\psitt+ \psit)\|_{L^2(\Om)}\ds.
	\end{aligned}
\end{equation}

\noindent Similarly to \eqref{L2estimate:nonlinearity}, we bound:
\begin{equation}\label{some_inequalities}
    \| \D (\nabla \psi \cdot \nabla \psit)\|_{L^2(\Omega)}  \lesssim \|\nabla \D \psit\|_{L^2(\Omega)} \|\nabla \D \psi\|_{L^2(\Omega)}.
\end{equation}

Next, we notice that, similarly to \eqref{eq:start_bootstrap_L2}, we can set:
\begin{equation}
\tau \nabla\D \psit + \nabla \D \psi = \nabla \D z,
\end{equation}
and write the convolution solution representation for the first order ODE above for $t\geq 0 $
\begin{equation}\label{eq:ODE_rep_conv}
    \nabla \D \psi: t\mapsto  \frac1\tau e^{-\,\cdot\,/\tau} \Lconv \nabla \D z  + e^{-t/\tau} \nabla \D\psi_0.
\end{equation}
From \eqref{eq:ODE_rep_conv}, we infer that 
\begin{equation}
\begin{aligned}\label{eq:bound_psi}
        \|\nabla \D \psi(t)\|_{L^2(\Omega)} &\leq \sup_{\nu \in(0,t)}  \|\nabla \D \psi(\nu)\|_{L^2(\Omega)}  
    \\& 
    \leq \left\|\frac1\tau e^{-\,\cdot\,/\tau} \right\|_{L^1(0,t)} \sup_{\nu \in(0,t)} \|\nabla \D z\|_{L^2(\Omega)} + \|\nabla \D \psi_0\|_{L^2(\Om)}
    \\
    & \leq F^{1/2} + \|\nabla\D\psi_0\|_{L^2(\Omega)}.
\end{aligned}
\end{equation}
In the last line, we have used that \[\left\|\frac1\tau e^{-\,\cdot\,/\tau} \right\|_{L^1(0,t)} \leq \left\|\frac1\tau e^{-\,\cdot\,/\tau} \right\|_{L^1(0,\infty)} = 1\] 
and that 
\[\sup_{\nu \in(0,t)} \|\nabla \D z\|_{L^2(\Omega)} \leq F^{1/2}.\]

Using the triangle inequality on $\tau \nabla\D \psit = - \nabla \D \psi + \nabla \D z,$ and combing it with estimate~\eqref{eq:bound_psi}, we obtain the estimate:
\begin{equation} \label{eq:bound_psit}
\|\nabla \D \psit(t)\|_{L^2(\Omega)}  \lesssim F^{1/2} + \|\nabla\D\psi_0\|_{L^2(\Omega)},
\end{equation}
with the hidden constant being independent of time. 

 Using \eqref{some_inequalities} together with \eqref{eq:bound_psi}--\eqref{eq:bound_psit}, we can bound \eqref{eq:psit_psitt_product} as follows
\begin{equation}
	\begin{aligned}
		\Big| \intt  (\nabla \psi \cdot \nabla & \psit,\D^2 (\tau\psitt+ \psit))_{L^2(\Omega)} \ds \Big| \\ &\lesssim  \intt F^{3/2}(s)\ds + \|\nabla \D\psi^2\|_{L^2(\Om)} \intt F(s) \ds+ \|\nabla \D\psi_0\|^2_{L^2(\Om)} \intt F^{1/2} \ds.
	\end{aligned}
\end{equation}

For the source term, we integrate by parts and use H\"older's and Young's inequalities as follows:
\begin{equation}
    \intt \intO f \D^2 \left( \psit+\tau \psitt\right) \dxs \leq C(\varepsilon) \|\D f\|^2_{L^1(\R^+;\,L^2(\Om))} + \varepsilon F(t), 
\end{equation}
for arbitrary $\varepsilon>0$.

Inserting the above estimate into \eqref{eq:first_estimate_linearized}, taking the supremum over $(0,t)$ and taking $\varepsilon$ small enough to be absorbed by the left-hand side, we obtain
\begin{equation}\label{estimate:nonlinear}
    \begin{multlined}
		F(t) \lesssim   E_1(0) + \|\D f\|^2_{L^1(\R^+;\,L^2(\Om))}+ \intt F^{3/2}(s)\ds + \|\nabla \D\psi_0\|_{L^2(\Om)} \intt F(s) \ds \\+ \|\nabla \D\psi_0\|^2_{L^2(\Om)} \intt F^{1/2} \ds. 
        \end{multlined}
	\end{equation}

\subsection*{Applying Gronwall's lemma}
The functional  $F$ is bounded up to a time $T_*=T_*(N_0)$ which depends on the size of the initial data. To see this, notice that by Gronwall's lemma; see, e.g., \cite[p. 47]{john1990nonlinear},  estimate \eqref{estimate:nonlinear} ensures that  $$F(t)\leq z(t),$$
where $z(t)$ is the solution of	 \begin{equation}\label{eqn:bound_equation}
			z'=c_0 z^\frac12 + c_1 z(s)+ c_2 z(s)^\frac32,
		\end{equation}
where $z(0) = C_* \left[E_1(0) + \|\D f\|^2_{L^1(\R^+;\,L^2(\Om))}\right]$, with $C_*$ being the hidden constant in \eqref{estimate:nonlinear}. The constants $c_{1,2,3}$ correspond to the coefficient in \eqref{estimate:nonlinear} (multiplied by the hidden constant $C_*$). Classical ODE theory guarantees the existence of at least a local-in-time solution to \eqref{eqn:bound_equation}. Note that \eqref{eqn:bound_equation} only depends on the size of the data of the problem and thus the final time $T_*$ only depends on $N_0$.

\subsection*{Step (ii): Passing to the limit}
From the previous estimates, we know that the sequence of approximate solutions $\psi^n$ (where we stress again the dependence on the Galerkin approximation level $n$) stays bounded uniformly with respect to $n$. In particular, we have the following weak(-$\star$) convergence of a subsequence (which we do not relabel):  
 \begin{alignat}{2}. 
	\psi^{n} &\stackrel{}{\relbar\joinrel\rightharpoonup} \psi \quad &&\textrm{weakly-$*$ in } L^\infty(0,T_*; \Honethree),\\
	\psit^{n} &\stackrel{}{\relbar\joinrel\rightharpoonup} \psit \quad &&\textrm{weakly-$*$ in }L^\infty(0,T_*; \Honethree), \\
	\psitt^{n} &\stackrel{}{\relbar\joinrel\rightharpoonup} \psitt \quad &&\textrm{weakly-$*$ in }L^\infty(0,T_*; \Honetwo).
\end{alignat}
Using the well-known Aubin--Lions--Simon lemma~\cite[Thoerem 5]{simon1986compact}, we know that there exists a strongly convergent subsequence (again not relabeled) in the following sense
 \begin{alignat}{2}\label{eq:strong_convergence}
	\psi^{n} &\longrightarrow \psi \quad &&\textrm{strongly in } C([0,T_*]; \Honetwo),\\
	\psit^{n} &\longrightarrow \psit \quad &&\textrm{strongly in }C([0,T_*]; \Honetwo).
\end{alignat}

Additionally, since $\nabla \psi^n\cdot \nabla \psit^n$ is bounded in $L^\infty(0,T_*;\Honetwo)$ (see estimate~\eqref{L2estimate:nonlinearity}), we infer that 
 \begin{alignat}{2}
	\nabla\psit^n\cdot\nabla \psi^n &\stackrel{}{\relbar\joinrel\rightharpoonup} y \quad &&\textrm{weakly-$*$ in }L^\infty(0,T_*; H_0^1(\Omega)).
\end{alignat}
To show that $y = \nabla\psit\cdot\nabla \psi$ weakly, we use the fact that $\nabla\psit^n\cdot\nabla \psi^n$ is the product of a weakly convergent sequence in $L^\infty(0,T_*;H^2(\Omega))$ and a strongly convergent sequence in $C([0,T_*]; H^1(\Omega))$. The fact that $\nabla\psi\cdot\nabla\psit |_{\partial(\Omega)} =0 $ is inherited form \eqref{eq:boundary_nonlin} by passing to the limit in the $H^1(\Omega)$ norm.

We mention that since $\genk\in L^1_\textup{loc}(\R^+)$, we also obtain the weak convergence:
 \begin{alignat}{2}
	\genk\Lconv \D\psitt &\stackrel{}{\relbar\joinrel\rightharpoonup} y \quad &&\textrm{weakly-$*$ in }L^\infty(0,T_*; L^2(\Omega)).
\end{alignat}
 
This is enough to show that the limit $\psi \in \mathcal{U}(0,T_*)$ solves the weak form 
\begin{equation} \label{eq:weak_form_local}
    \begin{multlined}
	-\int_0^{T_*}(\tau \psitt, v_t)_{L^2(\Omega)}\ds  
    +\int_0^{T_*} ((1+2k\psit)\psitt, v)_{L^2(\Omega)}\ds
	\\ + \int_0^{T_*} (c^2 \Delta \psi + \tau c^2 \Delta\psit
	+ \delta   \genk\Lconv \Delta \psitt,  v)_{L^2(\Omega)} \ds \\= \int_0^{T_*} (f,v)_{L^2(\Om)}\ds - \tau (\psi_2, v(0))_{L^2(\Omega)}\ds ,
        \end{multlined}
\end{equation}
for all $v \in H^1(0,T_*;L^2(\Om)) \cap L^2(0,T_*;H_0^1)$ such that $v(T_*)=0$.
Attainment of initial data of the constructed solution is guaranteed by the combination of the strong convergence of $\psi,\, \psit$ and the convergence of the $L^2(\Om)$ projections of the initial data. The attainment of the third initial datum is guaranteed weakly through the weak form~\eqref{eq:weak_form_local}.

\subsection*{Bootstrap estimate for \texorpdfstring{$\psittt$}{psittt}}
Since 
\[\tau \psittt  = - \psitt +  c^2\Delta \psi + \tau c^2 \Delta \psit + \delta \genk \Lconv\Delta (\tau\psitt+ \psit)- \sigma \nabla \psi \cdot \nabla \psi_t - f,\]
the triangle inequality yields a bound on the norm of $\|\psittt\|_{L^1(0,T_*; H_0^1(\Omega))}$.

Uniqueness of solutions, follows from standard argument, see, e.g., \cite{meliani2024well,meliani2023unified}. Its details are omitted.

\end{proof}

%%%%%%%%%%%%%%%%%

\end{document}